\newtheorem{thm}{Theorem}[section]
\newtheorem*{thm*}{Theorem}
\newtheorem{prop}[thm]{Proposition}
\newtheorem{claim}[thm]{Claim}
\newtheorem{cor}[thm]{Corollary}
\newtheorem{qu}[thm]{Question}
\theoremstyle{definition}
\newcommand{\flo}[1]{\lfloor #1 \rfloor}
\newcommand{\cei}[1]{\lceil #1 \rceil}
\tikzset{global scale/.style={
		scale=#1,
		every node/.append style={scale=#1}
	}
}
\begin{document}

\begin{frontmatter}



\title{Ramsey goodness of fans}


\author[inst1]{Yanbo Zhang}
\ead{ybzhang@hebtu.edu.cn}

\affiliation[inst1]{organization={School of Mathematical Sciences, Hebei Normal University},
            city={Shijiazhuang},
            postcode={050024}, 
            state={Hebei},
            country={China}}
\author[inst2]{Yaojun Chen}
\ead{yaojunc@nju.edu.cn}

\affiliation[inst2]{organization={Department of Mathematics, Nanjing University},
            city={Nanjing},
            postcode={210093}, 
            state={Jiangsu},
            country={China}}

\begin{abstract}
Given two graphs $G_1$ and $G_2$, the Ramsey number $r(G_1,G_2)$ refers to the smallest positive integer $N$ such that any graph $G$ with $N$ vertices contains $G_1$ as a subgraph, or the complement of $G$ contains $G_2$ as a subgraph. A connected graph $H$ is said to be $p$-good if $r(K_p,H)=(p-1)(|H|-1)+1$. A generalized fan, denoted as $K_1+nH$, is formed by the disjoint union of $n$ copies of $H$ along with an additional vertex that is connected to each vertex of $nH$. Recently Chung and Lin proved that $K_1+nH$ is $p$-good for $n\ge cp\ell/|H|$, where $c\approx 52.456$ and $\ell=r(K_{p},H)$. They also posed the question of improving the lower bound of $n$ further so that $K_1+nH$ remains $p$-good.

In this paper, we present three different methods to improve the range of $n$. First, we apply the Andr{\'a}sfai-Erd{\H{o}}s-S{\'o}s theorem to reduce $c$ from $52.456$ to $3$. Second, we utilize the approach established by Chen and Zhang to achieve a further reduction of $c$ to $2$. Lastly, we employ a new method to bring $c$ down to $1$. In addition, when $K_1+nH$ forms a fan graph $F_n$, we can further obtain a slightly more refined bound of $n$.
\end{abstract}



\begin{keyword}
Ramsey goodness \sep Fan \sep Complete graph
\MSC[2020] 05C55 \sep 05D10
\end{keyword}

\end{frontmatter}


\section{Introduction}

We use $K_p$ to denote a complete graph on $p$ vertices. Given two graphs $G_1$ and $G_2$, we use the notation $K_N\to (G_1,G_2)$ to indicate that for any graph $G$ with $N$ vertices, either $G$ contains $G_1$ as a subgraph or its complement, denoted as $\overline{G}$, contains $G_2$ as a subgraph. The \emph{Ramsey number $r(G_1,G_2)$} refers to the smallest positive integer $N$ such that $K_N\to (G_1,G_2)$. In the early years of graph Ramsey theory, Chv\'atal and Harary~\cite{Chvatal1972Generalized} established a useful lower bound of $r(G_1,G_2)$. Let $\chi$ be the chromatic number of the graph $G_1$ and let $c$ be the number of vertices in a largest connected component of $G_2$. The lower bound can be expressed as \[r(G_1,G_2)\ge (\chi-1)(c-1)+1\,.\] This lower bound is straightforward to obtain by considering the graph consisting of the disjoint union of $\chi-1$ copies of $K_{c-1}$. It does not contain a connected component with at least $c$ vertices; hence it does not contain $G_2$ as a subgraph. On the other hand, the chromatic number of its complement is $\chi-1$, implying that $\overline{G}$ does not contain $G_1$ as a subgraph. As a special case of the above lower bound, when $H$ is a connected graph, it is obvious that $r(K_p,H)\ge (p-1)(|H|-1)+1$, where $|H|$ represents the order of $H$. Burr and Erd\H{o}s~\cite{Burr1983Generalizations} then introduced the concept of $p$-goodness. A connected graph $H$ is called \emph{$p$-good} if \[r(K_p,H)=(p-1)(|H|-1)+1\,.\]

Based on this, Burr presented a stronger version of the lower bound and introduced the concept of $G_1$-goodness. The symbols $\chi$ and $c$ are defined as before. We use $s$ to represent the smallest number of vertices in any color class over all proper colorings of $G_1$ with $\chi$ colors. When $c \ge s$, the lower bound can be expressed as \[r(G_1,G_2) \ge (\chi-1)(c-1)+s.\] Moreover, if equality holds, $G_2$ is called \emph{$G_1$-good}. In the following, we only discuss the case where $G_1$ is a complete graph $K_{p}$. Specifically, what types of sparse graphs are considered to be $p$-good?

Chv\'atal~\cite{Chvatal1977Tree} demonstrated a simple yet fundamental result: all trees are $p$-good for every positive integer $p$. In addition to trees, cycles are among the most studied sparse graphs. Let $C_n$ denote a cycle of length $n$. Bondy and Erd\H{o}s~\cite{Bondy1973Ramsey} established that for $n \ge p^{2}-2$ and $p \ge 3$, $C_n$ is $p$-good. Schiermeyer~\cite{Schiermeyer2003All} subsequently extended this finding to hold when $n \ge p^{2}-2p$ and $p > 3$. Building upon these advancements, Nikiforov~\cite{Nikiforov2005Cycle} broadened the scope of $n$ to $n \ge 4p+2$. A recent development came from the work of Keevash, Long, and Skokan~\cite{Keevash2019Cycle}, who established the existence of an absolute constant $C$ such that for $n \ge C\frac{\log p}{\log\log p}$, $C_n$ is $p$-good. Recall that the book $B_{k,n}$ comprises $n-k$ distinct $(k+1)$-cliques sharing a common $k$-clique. Nikiforov and Rousseau~\cite{Nikiforov2004Large} utilized the regularity lemma to show that for fixed values of $p \ge 3$ and $k \ge 3$ and sufficiently large $n$, $B_{k,n}$ is $p$-good. Quite recently, Fox, He, and Wigderson~\cite{Fox2023Ramsey} made a significant improvement to this result by demonstrating that every $B_{k,n}$ with $n \ge 2^{k^{10p}}$ is $p$-good.

This paper primarily investigates the $p$-good property of generalized fans. We denote the disjoint union of $n$ copies of $H$ as $nH$. A generalized fan $K_1+nH$ consists of $nH$ along with an additional vertex, where this extra vertex is connected to each vertex of $nH$. Li and Rousseau~\cite{Li1996Fan} utilized the Erd\H{o}s-Simonovits stability lemma to establish that $K_1+nH$ is $(K_2+G_1)$-good, where $G_1$ and $H$ are arbitrary fixed graphs and $n$ is sufficiently large. When $G_1$ is a complete graph, Nikiforov and Rousseau~\cite{Nikiforov2009Ramsey} further showed a more generalized result. Given a graph $G$ of order $n$ and a vector of positive integers $\mathbf{k}=(k_1, \ldots ,k_n)$, we define $G^{\mathbf{k}}$ as the graph obtained from $G$ by replacing each vertex $i \in [n]$ with a clique of order $k_i$, and replacing every edge $ij\in E(G)$ with a complete bipartite graph $K_{k_i,k_j}$.
 
 \begin{thm}[Nikiforov and Rousseau~\cite{Nikiforov2009Ramsey}]
	Suppose $K$ and $p$ are positive integers with $p\ge 3$, $T_n$ is a tree of order $n$, and $\mathbf{k}=(k_1, \ldots ,k_n)$ is a vector of integers with $0<k_i\le K$ for all $i\in [n]$. Then, for sufficiently large $n$, $T_n^{\mathbf{k}}$ is $p$-good.
  \end{thm}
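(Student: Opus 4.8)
The plan is to prove the lower bound by the standard construction and the matching upper bound by induction on $p$, the inductive step being driven by a stability analysis. Write $m := |T_n^{\mathbf k}| = \sum_{i=1}^{n} k_i$; since $1\le k_i\le K$ we have $n\le m\le Kn$, so $m$ is large exactly when $n$ is. The lower bound $r(K_p,T_n^{\mathbf k})\ge (p-1)(m-1)+1$ is immediate from the Chvátal--Harary bound because $T_n^{\mathbf k}$ is connected, so the whole content is the upper bound, which I restate as: \emph{every graph $F$ on $N:=(p-1)(m-1)+1$ vertices with $\alpha(F)\le p-1$ contains $T_n^{\mathbf k}$} (take $F=\overline G$). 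I would induct on $p$: the cases $p\le 2$ are trivial (for $p=2$, $F=K_m$), and $p=3$ can be handled directly using the Andr\'asfai--Erd\H{o}s--S\'os structure theorem for triangle-free graphs.

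For the inductive step I would first run Chv\'atal's reduction. If $F$ has a vertex $u$ with $\deg_F(u)\le m-2$, let $U$ be the set of vertices other than $u$ non-adjacent to $u$; then $|U|=N-1-\deg_F(u)\ge (p-2)(m-1)+1$ and $\alpha(F[U])\le p-2$, since any independent set of $F[U]$ together with $u$ is independent in $F$. The induction hypothesis then gives $T_n^{\mathbf k}\subseteq F[U]$. So we may assume $F$ has minimum degree at least $m-1$, which makes $e(\overline F)$ close to $\mathrm{ex}(N,K_p)$ and hence gives the stability theorem content. Now apply the Erd\H{o}s--Simonovits stability theorem to the $K_p$-free graph $G:=\overline F$: for a suitable $\gamma=\gamma(p,K,\varepsilon)>0$, either $(a)$ $G$ is within $\gamma N^2$ edges of the Tur\'an graph $T(N,p-1)$ --- equivalently $F$ is within $\gamma N^2$ edges of a disjoint union of $t\le p-1$ cliques $C_1,\dots,C_t$ with $\sum_j|C_j|=N$ --- or $(b)$ $G$ is $\gamma N^2$-far from $T(N,p-1)$, in which case $e(F)$ exceeds the disjoint-cliques baseline by $\gamma N^2$.

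In case $(a)$, pigeonhole yields a cluster $C$ with $|C|\ge\lceil N/(p-1)\rceil=m$; if moreover $t=p-1$ then, since for small $\gamma$ no two clusters span many cross-edges, each $F[C_j]$ must in fact be a clique (otherwise an independent pair inside some $C_j$, padded by one vertex from each other cluster, forces $\alpha(F)\ge p$), so $F\supseteq K_m\supseteq T_n^{\mathbf k}$ and we are done. If $t\le p-2$ then $|C|\ge m+\Omega(m)$, so there is linear slack inside an almost-complete host; and in case $(b)$ I would feed $F$ into the regularity lemma, its reduced graph inheriting both $\alpha\le p-1$ and the surplus of edges, which should produce either a clique of order $m$ in $F$ or a sufficiently regular structure into which $T_n^{\mathbf k}$ lifts via the embedding lemma.

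The two genuinely constructive tasks --- embedding $T_n^{\mathbf k}$ into an almost-complete host with linear slack, and lifting it through a regular structure --- are where I expect the main obstacle to lie, and both rest on two features of tree blow-ups. First, $T_n^{\mathbf k}$ is built by attaching the cliques $Q_1,\dots,Q_n$ (each of order $\le K$) one at a time along a search order of $T_n$, each $Q_i$ for $i\ge 2$ being completely joined to exactly one earlier clique; this gives a clique-by-clique greedy embedding that succeeds provided certain common neighbourhoods in the host remain large. Second --- and this is what keeps the few non-edges of the host from accumulating --- a tree on $n$ vertices has a set of at most $\lceil 1/\varepsilon\rceil-1$ vertices whose deletion leaves components of order $\le\varepsilon n$, so $T_n^{\mathbf k}$ has a separator $S$ of at most $K(\lceil 1/\varepsilon\rceil-1)=O_{\varepsilon,K}(1)$ vertices with every component of $T_n^{\mathbf k}-S$ of order $\le\varepsilon m$ and with bounded interface to $S$. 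One embeds $S$ first, into a pocket of the host chosen to avoid the $O(\gamma N^2)$ bad edges and the few atypical vertices, and then embeds each small component greedily into disjoint parts of the remaining room; since each component has only $\le\varepsilon m$ vertices, the obstructions it can encounter total $o(m)$ and are absorbed by the slack. Carrying out this bookkeeping --- balancing the $O(\gamma N^2)$ defect of the host against the $O(\varepsilon m)$-sized pieces of $T_n^{\mathbf k}$, and choosing the pocket for $S$ so the greedy process never stalls --- is the technical heart, and this is exactly where the hypothesis that $n$ be sufficiently large is needed.
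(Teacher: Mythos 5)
First, a point of reference: the paper does not prove this theorem at all; it is quoted from Nikiforov and Rousseau, and the paper only remarks that the known proof goes through Szemer\'edi's regularity lemma (hence the tower-type bound on $n$). So your proposal cannot be compared with an in-paper argument; judged on its own terms it is in the same general spirit as the cited proof (stability plus regularity plus a delicate embedding), but as written it has genuine gaps at exactly the points that carry the difficulty of the theorem.

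Concretely: (i) the claim that $\delta(F)\ge m-1$ ``makes $e(\overline F)$ close to $\mathrm{ex}(N,K_p)$'' is backwards --- that minimum-degree condition only bounds $e(\overline F)$ from \emph{above} by roughly the Tur\'an number, while stability needs a matching lower bound; your subsequent dichotomy (near-Tur\'an structure versus a $\gamma N^2$ edge surplus in $F$) is the legitimate way to invoke stability, but the stated justification is not. (ii) In case (a) with $t=p-1$, the deduction that every $F[C_j]$ is a complete clique fails: stability only gives $O(\gamma N^2)$ edits, and an independent pair $x,y$ inside $C_j$ may be joined to almost all vertices of the other clusters (such cross-edges are permitted by the edit budget), so it need not extend to an independent set of size $p$. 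Since the largest cluster may have order exactly $m=|T_n^{\mathbf k}|$, there is no slack, and embedding an $m$-vertex graph into an almost-clique of order exactly $m$ is precisely the hard content of the theorem rather than a corollary of approximate structure; the necessary cleaning of the partition, relocation of atypical vertices, and use of the minimum-degree hypothesis to create room are all missing. (iii) In case (b), the assertion that a reduced graph of $F$ ``inherits $\alpha\le p-1$'' is unjustified --- sparse or irregular pairs of clusters do not yield independent sets of $F$ --- and lifting $T_n^{\mathbf k}$, a graph of order linear in $N$, through a regular partition is exactly where the bounds $k_i\le K$ and the tree-separator structure must be exploited quantitatively; saying the obstructions ``total $o(m)$ and are absorbed by the slack'' is not yet an argument, particularly in those subcases where no linear slack has been exhibited. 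Finally, the $p=3$ base case is not immediate from Andr\'asfai--Erd\H{o}s--S\'os: after your Chv\'atal-type reduction you control $\Delta(\overline F)$, not $\delta(\overline F)$, so the minimum-degree hypothesis of that theorem is not available without an additional degree argument of the kind the paper derives (for fans) from the special structure of $K_1+nH$.
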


Since the above theorem utilizes Szemer\'edi's regularity lemma, the lower bound of $n$ is of tower type. The next natural idea is to expand the range of $n$. When $H=K_2$, we refer to $K_1+nK_2$ as a fan, denoted as $F_n$. Li and Rousseau~\cite{Li1996Fan} also proved that $F_n$ is $3$-good for $n\ge 2$. Surahmat, Baskoro, and Broersma~\cite{Surahmat2005Ramsey} extended this result and proved that $F_n$ is $4$-good for $n\ge 3$. The authors of this paper~\cite{Zhang2014Ramsey} developed a method to prove that $F_n$ is $5$-good for $n\ge 5$. Based on the same approach, Kadota, Onozuka, and Suzuki~\cite{Kadota2019graph} subsequently established that $F_n$ is $6$-good for $n\ge 6$.

Recently, Chung and Lin~\cite{Chung2022Fan} considered the general case and derived the following result.
\begin{thm}[Chung and Lin~\cite{Chung2022Fan}]\label{ChungLin}
Let $H$ be an arbitrary given graph, $h=|H|$, $\ell=r(K_p,H)$, and $c=(3+3\sqrt{2})^{2}\approx 52.456$. Then $K_1+nH$ is $p$-good for $n\ge cp\ell/h$.
\end{thm}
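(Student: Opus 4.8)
The plan is to prove something marginally stronger than stated: that $K_1+nH$ is $p$-good already for $n\ge 3p\ell/h$. Since $c=(3+3\sqrt{2})^2>3$, this contains Theorem~\ref{ChungLin}. Write $F=K_1+nH$, so that $|F|=nh+1$, and set $N=(p-1)nh+1$. As $F$ is connected, the Chv\'atal--Harary bound already gives $r(K_p,F)\ge N$, so it suffices to prove $K_N\to(K_p,F)$. Colour the edges of $K_N$ red and blue, assume there is no red $K_p$, write $G$ for the red graph, and look for a blue $F$.

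I would split according to the largest blue degree. \emph{First case: some vertex $v$ has blue degree at least $(n-1)h+\ell$.} Let $B$ be the blue neighbourhood of $v$. The red graph induced on $B$ is $K_p$-free and $|B|\ge\ell=r(K_p,H)$, so $B$ contains a blue copy of $H$; deleting its $h$ vertices leaves at least $(n-2)h+\ell\ge\ell$ vertices, and repeating this $n$ times produces $n$ pairwise disjoint blue copies of $H$ inside $B$. Since $v$ is blue-adjacent to all of $B$, adding $v$ to these copies gives a blue $F$, and we are done.

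\emph{Second case: every vertex has blue degree at most $(n-1)h+\ell-1$}, equivalently $\delta(G)\ge N-(n-1)h-\ell=(p-2)nh+h+1-\ell$. Here I would invoke the Andr\'asfai--Erd\H{o}s--S\'os theorem: a $K_p$-free graph on $N$ vertices with minimum degree exceeding $\tfrac{3p-7}{3p-4}N$ is $(p-1)$-partite. Multiplying the inequality $\delta(G)>\tfrac{3p-7}{3p-4}N$ through by $3p-4$ and using $(3p-4)(p-2)-(3p-7)(p-1)=1$, it becomes $nh>(3p-4)\ell-(3p-4)(h+1)+3p-7$, which is comfortably implied by $n\ge 3p\ell/h$. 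So $G$ is $(p-1)$-partite; the blue graph then contains a disjoint union of cliques on the $p-1$ colour classes, and since their orders sum to $N=(p-1)nh+1$ one of them has order at least $nh+1$. This blue $K_{nh+1}$ contains every graph on $nh+1$ vertices, in particular $F$, which finishes the argument.

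The step I expect to be the genuine obstacle is the degree comparison in the second case. Because $(3p-4)(p-2)-(3p-7)(p-1)$ equals only $1$, the linear-in-$n$ margin between $\delta(G)$ and the Andr\'asfai--Erd\H{o}s--S\'os threshold is merely $\tfrac{nh}{3p-4}$, so one has to choose the cutoff in the first case (here $(n-1)h+\ell$) sharply enough that $n\ge 3p\ell/h$ still dominates the additive corrections coming from $\ell$ and $h$; the greedy extraction of the $H$-copies and the pigeonhole on the colour classes are then routine. (The trivial case $h=1$, where $F$ is a star, is anyway covered by Chv\'atal's theorem on trees and can be set aside.)
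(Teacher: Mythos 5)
Your proposal is correct and follows essentially the same route as the paper: Subsection~3.1 proves the stronger bound $n\ge 3p\ell/h$ exactly this way, deriving $\delta(G)\ge (p-2)nh+h-\ell+1$ from the blue-degree bound $d_{\overline{G}}(v)\le (n-1)h+\ell-1$ (via $r(K_p,nH)\le (n-1)h+\ell$, your greedy extraction), then applying the Andr\'asfai--Erd\H{o}s--S\'os theorem and pigeonhole on the $p-1$ colour classes to produce a blue $K_{nh+1}\supseteq K_1+nH$. The only cosmetic difference is that you phrase it as a direct dichotomy on the maximum blue degree rather than within the paper's inductive framework, which does not affect the argument.
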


They also raised the question of further improving the lower bound of $n$. This paper aims to answer the question. Our main theorem is as follows.
\begin{thm}\label{main}
Let $H$ be an arbitrary given graph, $h=|H|$, and $\ell=r(K_p,H)$. Then $K_1+nH$ is $p$-good if one of the following conditions is satisfied:
\begin{enumerate}[(1)]
\item $n\ge 2p\ell/h$;
\item $n\ge \max\{(p+2)\ell/h, \ell-h\}$.	
\end{enumerate}
\end{thm}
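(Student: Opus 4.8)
The plan is to establish $r(K_p,K_1+nH)\le N$ for $N:=(p-1)nh+1$; together with the Chv\'atal--Harary lower bound this gives $p$-goodness under either hypothesis. So fix a $K_p$-free graph $G$ on $N$ vertices and look for $K_1+nH$ in $\overline G$. The centre of such a copy is a vertex $v$ all of whose $\overline G$-neighbours lie in $\bar N_G(v):=V(G)\setminus(N_G(v)\cup\{v\})$ and must carry the $nH$; so the task is to find a vertex $v$ for which $\overline G$ contains $nH$ inside $\bar N_G(v)$.

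First I would record a greedy extraction lemma: any $K_p$-free graph on at least $\ell+(k-1)h$ vertices has $kH$ in its complement (repeatedly peel off one copy of $H$, using $\ell=r(K_p,H)$, and delete its $h$ vertices). Applied inside $\bar N_G(v)$, this finishes the proof whenever some vertex satisfies $|\bar N_G(v)|\ge nh+\ell-h$. Hence we may assume
\[
\delta(G)\ \ge\ N-1-(nh+\ell-h-1)\ =\ (p-2)nh+h-\ell+1\ =:\ \delta_0,
\]
so $G$ is $K_p$-free with very large minimum degree; since $\delta_0=\tfrac{p-2}{p-1}N-(\ell-h)+\tfrac1{p-1}$, in the regime $n\gtrsim p\ell/h$ the ratio $\delta_0/N$ lies just below the Tur\'an density $\tfrac{p-2}{p-1}$ of $K_p$.

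The heart of the matter is to use this high minimum degree to locate a single large independent set in $G$ (equivalently a large clique in $\overline G$): an independent set of size $\lceil N/(p-1)\rceil=nh+1$ gives $\overline G\supseteq K_{nh+1}\supseteq K_1+nH$. The softest version, which I would present first as motivation and which already yields $c=3$, invokes the Andr\'asfai--Erd\H{o}s--S\'os theorem: a $K_p$-free graph with $\delta(G)>\tfrac{3p-7}{3p-4}N$ is $(p-1)$-partite, and a routine computation shows $\delta_0$ clears this bound once $n\ge(3p-4)(\ell-h)/h$. For the sharper constants of conditions~(1) and~(2) there is no room for the loss between $\tfrac{p-2}{p-1}$ and $\tfrac{3p-7}{3p-4}$ (a gap of size only $\tfrac1{(p-1)(3p-4)}$), so one cannot conclude that $G$ is $(p-1)$-partite and must argue more carefully. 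For~(1), following the structural approach of Chen and Zhang, I would descend into the neighbourhood of a minimum-degree vertex — which is $K_{p-1}$-free and still has a large relative minimum degree, $\delta(G[N_G(v)])\ge 2\delta_0-N$ of ratio about $\tfrac{p-3}{p-2}$ — iterate this down the clique number, and treat the bounded-size near-extremal cases by hand; controlling how much of the degree surplus survives each descent is what produces the constant $2$ and uses the hypothesis $n\ge 2p\ell/h$. For~(2), a more efficient argument shows that if $\overline G$ has \emph{no} $K_1+nH$ then in fact $\delta(G)\le(p-2)nh-n$; this contradicts the lower bound $\delta(G)\ge\delta_0=(p-2)nh+h-\ell+1$ exactly when $n\ge\ell-h$, which is why that auxiliary hypothesis appears, while the proof of the upper bound $\delta(G)\le(p-2)nh-n$ is where $n\ge(p+2)\ell/h$ is spent.

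The main obstacle is precisely this structural step for~(1) and~(2): ruling out $K_p$-free graphs whose minimum degree sits in the narrow band $\tfrac{3p-7}{3p-4}N\gtrsim\delta(G)\ge\delta_0\approx\tfrac{p-2}{p-1}N$ and which have independence number at most $nh$. The natural obstructions are blow-ups of generalized Andr\'asfai-type graphs; these do have small independence number, but their minimum-degree ratio drops strictly below the band once $n$ is large, and pinning down how large — and organizing the recursion (for~(1)) or the degree estimate (for~(2)) tightly enough to land on the clean constants $2$ and $1$ rather than, say, $2.4$ — is the delicate part, and it is where the quantitative slack in $n\ge 2p\ell/h$, respectively $n\ge\max\{(p+2)\ell/h,\ell-h\}$, gets used.
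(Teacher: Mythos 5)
Your setup (contradiction, the greedy extraction lemma giving $\delta(G)\ge(p-2)nh+h-\ell+1$, the observation that an independent set of size $nh+1$ finishes, and the Andr\'asfai--Erd\H{o}s--S\'os route for the weaker constant $3$) matches the paper's preliminaries. But for the two conditions actually claimed in the theorem, your proposal stops at a plan: the decisive steps are asserted rather than proved, and for condition (1) the plan as described cannot succeed. You correctly note that $\delta_0/N$ sits \emph{below} the Tur\'an ratio $\tfrac{p-2}{p-1}$ (by roughly $(\ell-h)/N$), so iterating the neighbourhood descent $\delta(G[N_G(v)])\ge 2\delta(G)-N$ down the clique number gives no contradiction from $K_p$-freeness and minimum degree alone; the bound $\alpha(G)\le nh$ must be injected somewhere, and you never say how --- ``treat the bounded-size near-extremal cases by hand'' and the remarks about Andr\'asfai-type blow-ups are exactly the missing argument, not a proof of it. The paper closes this gap by a quite different mechanism: it runs an induction on $p$, fixes a $K_{p-1}$ in $G$, partitions $V(G)$ into classes $U_1,\dots,U_{p-1}$ (non-neighbours of exactly one clique vertex) and $U_p$, proves each $U_i$ is independent and bounds $|U_p|\le(p-1)(\ell-h)-1$ by degree counting, then shows every vertex of $U_p$ has no edge to exactly one $U_i$; assigning it there and pigeonholing produces $U_1\cup W_1$ of size $\ge nh+1$, which is shown to be independent, giving $K_{nh+1}\subseteq\overline G$. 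Nothing in your descent sketch produces that independent set, nor do you quantify where $n\ge 2p\ell/h$ enters beyond saying it ``gets used.''

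For condition (2) the gap is even more direct: the entire content is your claim that $\overline G$ having no $K_1+nH$ forces $\delta(G)\le(p-2)nh-n$, and you give no argument for it --- ``a more efficient argument shows'' is the whole proof, with the hypothesis $n\ge(p+2)\ell/h$ said to be ``spent'' there. The paper does not prove any such degree bound; instead it takes the largest class $U_1$ (of size $\ge nh-\ell+h+1$), adjoins the set $U_1'$ of vertices with no neighbour in $U_1$, shows $|U_1|+|U_1'|\le nh$ (else one builds $K_1+nH$ with hub in $U_1$), finds a $K_{p-1}$ in $G-(U_1\cup U_1')$ by the induction hypothesis, and then, via inclusion--exclusion degree counting (Claims~\ref{claim8}--\ref{claim10}, each again invoking the induction hypothesis to extract $K_{p-k-1}$'s), shows those $p-1$ vertices have a common neighbour in $U_1$, yielding a $K_p$. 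Note also that the induction on $p$ is load-bearing throughout the paper's argument, whereas your proposal has no analogue of it; without either that induction or a proof of your asserted degree bound, neither (1) nor (2) is established.
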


We observe that the first condition above effectively lowers the requirement from $n\ge 53p\ell/h$ in Chung and Lin's result to $n\ge 2p\ell/h$. In the second condition, we further decrease the requirement from $n\ge 2p\ell/h$ to $n\ge \max\{(p+2)\ell/h, \ell-h\}$. We will prove this theorem in Sections~\ref{secstructure} and \ref{secgeneral}. Before demonstrating that both of these conditions ensure $K_1+nH$ to be $p$-good, we will provide a little appetizer in Section~\ref{secgeneral}: we can reduce the constant $c$ from $53$ to $3$ by utilizing the Andr{\'a}sfai-Erd{\H{o}}s-S{\'o}s theorem. These proof methods may provide assistance in further expanding the range of $n$.

As a corollary to Theorem~\ref{ChungLin}, $F_n$ is $p$-good for $n\ge ((3+3\sqrt{2})p)^2/2\approx 26.228p^2$. In Section~\ref{secfan}, we will improve this lower bound to $n\ge (p^2-p-2)/2$ and $p\ge 3$.

\begin{thm}
$F_n$ is $p$-good for $n\ge (p^2-p-2)/2$ and $p\ge 3$.
\end{thm}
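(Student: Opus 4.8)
The plan is to prove the equivalent statement $r(K_p,F_n)=2n(p-1)+1$; since $|F_n|=2n+1$, the lower bound is the Chv\'atal--Harary bound, so it suffices to show that in any red/blue colouring of $K_N$ with $N=2n(p-1)+1$ whose red graph $G$ is $K_p$-free, the blue graph $\overline G$ contains $F_n$. I would argue by induction on $p$: the base case $p=3$ (which asks exactly for $n\ge 2$) is the theorem of Li and Rousseau~\cite{Li1996Fan}, so assume $p\ge 4$ and that $F_n$ is $(p-1)$-good for every $n\ge\frac{(p-1)^2-(p-1)-2}{2}=\frac{p^2-3p}{2}$. Since $\frac{p^2-p-2}{2}\ge\frac{p^2-3p}{2}$, the induction hypothesis is available for the whole range of $n$ considered.

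Two quick reductions cut the problem down to an almost-regular situation. First, if $\Delta(\overline G)\ge 2n+p-2$, take a vertex $v$ of maximum blue degree: the set $N_{\overline G}(v)$ has at least $2n+p-2$ vertices and, because $G$ is $K_p$-free, blue-independence number at most $p-1$; the unmatched vertices of a maximum blue matching inside $N_{\overline G}(v)$ are pairwise non-adjacent, so this matching has size at least $\lceil(2n-1)/2\rceil=n$, and together with $v$ it forms a blue $F_n$. Second, if $\Delta(G)\ge r(K_{p-1},F_n)=2n(p-2)+1$, take a vertex $v$ of maximum red degree: $G[N_G(v)]$ is $K_{p-1}$-free on at least $r(K_{p-1},F_n)$ vertices, so by the induction hypothesis its complement---an induced subgraph of $\overline G$---contains $F_n$. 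Hence I may assume $2n\le\delta(\overline G)\le\Delta(\overline G)\le 2n+p-3$.

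Next comes the structure. A maximum blue-independent set $I$ dominates $\overline G$, so $N-|I|\le|I|\,\Delta(\overline G)$; if $|I|=\alpha(\overline G)\le p-2$ this forces $N\le(p-2)(2n+p-2)$, which is strictly less than $2n(p-1)+1$ once $n\ge\frac{p^2-p-2}{2}$, a contradiction. So $\alpha(\overline G)=p-1$; fix $I=\{u_1,\dots,u_{p-1}\}$, let $A_i$ be the set of vertices whose only blue neighbour in $I$ is $u_i$, and let $B$ be the remaining vertices (those with at least two blue neighbours in $I$). A blue-independent subset of $A_i$ together with $I\setminus\{u_i\}$ is blue-independent, so $\overline G[A_i]$ has independence number at most $1$; thus $Q_i:=\{u_i\}\cup A_i$ induces a blue clique, and counting the blue edges between $I$ and $V\setminus I$ yields $|B|\le p^2-3p+1$.

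Finally, if some $|A_i|\ge 2n$ then $Q_i$ is a blue clique on at least $2n+1$ vertices and contains $F_n$; so assume every $|Q_i|\le 2n$. From $\sum_i|Q_i|=N-|B|$ and $|B|\le p^2-3p+1$ one gets that the largest $Q_i$ has $|A_i|\ge 2n-p+2$. For such an index $i$, look at $N_{\overline G}(u_i)=A_i\cup B_i$ with $B_i=B\cap N_{\overline G}(u_i)$: if every vertex of $B_i$ has at least $|B_i|$ blue neighbours in the clique $A_i$, a greedy matching saturates $B_i$ into $A_i$ and the remainder of $A_i$ matches up, producing a blue matching of size $\lfloor d_{\overline G}(u_i)/2\rfloor\ge n$, hence a blue $F_n$. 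The hard part will be the complementary case: if for every large clique $Q_i$ some vertex of the small set $B$ has fewer than $|B_i|$ blue neighbours in $A_i$, one must reach a contradiction by playing this scarcity off against $\delta(\overline G)\ge 2n$, the clique structure of all the $Q_j$'s, the bound on $|B|$, and the blue edges joining distinct $Q_j$'s; it is precisely this bookkeeping that dictates the hypothesis $n\ge\frac{p^2-p-2}{2}$, and, together with the base case $p=3$, it is where the real work lies.
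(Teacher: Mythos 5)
Your set-up faithfully reproduces the paper's structural phase: the degree window $2n\le d_{\overline G}(v)\le 2n+p-3$, a maximum red clique $I=\{u_1,\dots,u_{p-1}\}$, the partition of the remaining vertices into the sets $A_i$ (the paper's $U_i\setminus\{u_i\}$) and the exceptional set $B$ (the paper's $U_p$) with $|B|\le p^2-3p+1$, the fact that each $Q_i$ is a blue clique, and the bound $|A_i|\ge 2n-p+2$ for the largest part. All of this is correct and matches Sections~\ref{secstructure} and \ref{secfan}. But the proof stops exactly where the theorem actually lives. Your final paragraph handles only the easy alternative (a Hall-type condition forcing a blue matching saturating $B_i$ into $A_i$, hence a blue $F_n$ hubbed at $u_i$) and explicitly defers the complementary case as ``bookkeeping.'' That case is not bookkeeping, and the single-clique hub you propose can genuinely fail: nothing established so far prevents every vertex of $B_i$ from being red-adjacent to all of $A_i$ and from forming a red clique inside $B_i$ --- since $A_i$ is red-independent, such a configuration creates no red $K_p$ and no blue matching covering $B_i$, so no contradiction follows from $\delta(\overline G)\ge 2n$, $|B|\le p^2-3p+1$ and the clique structure alone. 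This is precisely why the hypothesis $n\ge(p^2-p-2)/2$ has to enter through a finer argument than the counting you have set up.

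For comparison, the paper resolves this case with a different idea: it works with the \emph{two} largest blue cliques $U_1,U_2$ (both of size at least $2n-p+2$), introduces the sets $U'_1,U'_2$ of vertices having no red edge to $U_1$ resp.\ $U_2$, shows $|U_i|+|U'_i|\le 2n$ (else a blue $F_n$ appears with its matching split between $U_i$ and $U'_i$), and then applies the induction hypothesis to $G-(U_1\cup U_2\cup U'_1\cup U'_2)$, which still has at least $2(p-3)n+1$ vertices, to extract a red $K_{p-2}$ on $v_1,\dots,v_{p-2}$. Inclusion--exclusion degree counting (Claims~\ref{claim11}--\ref{claim13}) then shows these $p-2$ vertices have many common red neighbours $X\subseteq U_1$ and $Y\subseteq U_2$; a red edge between $X$ and $Y$ completes a red $K_p$, and if there is none, a vertex of $X$ becomes the hub of a blue $F_n$ whose $n$ blue edges are distributed across the two blue cliques $U_1$ and $U_2$. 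So the missing step in your proposal is not a routine closing computation but the central mechanism of the proof; to complete your route you would essentially have to reinvent this two-clique, common-neighbour argument (or find a genuinely new one), and as written the proposal does not prove the theorem.
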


Wang and Qian~\cite{Wang2022Ramsey} studied the Ramsey number $r(K_p,s(K_1+nK_t))$, where $s(K_1+nK_t)$ represents the disjoint union of $s$ copies of $K_1+nK_t$. They proved that $r(K_p,s(K_1+nK_t))=nt(p+s-2)+s$ for sufficiently large $n$. By utilizing Theorem~\ref{main}, we can immediately obtain the following corollary, which improves upon their result. We consider the disjoint union of $K_{(nh+1)s-1}$ and $(p-2)K_{nh}$, which provides a lower bound for this result. The upper bound is obtained by iterating the result of Theorem~\ref{main}.

\begin{cor}
	Let $s$ and $p$ be positive integers. Let $H$ be an arbitrary connected graph with $h=|H|$ and $\ell=r(K_p,H)$. Then $r(K_p,s(K_1+nH))=nh(p+s-2)+s$ if one of the following conditions is satisfied:
	\begin{enumerate}[(1)]
	\item $n\ge 2p\ell/h$;
	\item $n\ge \max\{(p+2)\ell/h, \ell-h\}$.
	\end{enumerate}
\end{cor}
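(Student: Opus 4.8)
The plan is to prove the two bounds separately, with the lower bound being the easy direction and the upper bound following by induction on $s$ using Theorem~\ref{main}. For the lower bound $r(K_p,s(K_1+nH))\ge nh(p+s-2)+s$, I would exhibit a red/blue coloring of $K_{nh(p+s-2)+s-1}$ with no red $K_p$ and no blue $s(K_1+nH)$. Take the graph $G$ that is the disjoint union of one clique $K_{(nh+1)s-1}$ together with $p-2$ cliques each isomorphic to $K_{nh}$; color the edges inside these $p-1$ cliques blue and all remaining edges red. The red graph is $(p-1)$-partite, so it contains no $K_p$. For the blue side, any copy of $K_1+nH$ is connected and has $nh+1$ vertices, so it must lie inside a single blue clique; a clique on $m$ vertices can host at most $\lfloor m/(nh+1)\rfloor$ vertex-disjoint such copies, giving at most $(s-1)$ copies from the big clique and none from each $K_{nh}$, hence fewer than $s$ disjoint copies in total. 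Counting vertices, $|G| = (nh+1)s-1 + (p-2)nh = nh(p+s-2) + s - 1$, which matches the claimed bound.

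For the upper bound, I would induct on $s$. The base case $s=1$ is exactly Theorem~\ref{main}, which under either hypothesis gives $r(K_p,K_1+nH) = (p-1)((nh+1)-1)+1 = nh(p-1)+1 = nh(p+s-2)+s$ when $s=1$. For the inductive step, suppose the formula holds for $s-1$, and consider any $2$-coloring of $K_N$ with $N = nh(p+s-2)+s$, assuming no red $K_p$. Since $N \ge nh(p-1)+1 = r(K_p,K_1+nH)$, there is a blue copy $F$ of $K_1+nH$; delete its $nh+1$ vertices. The remaining graph has $N - (nh+1) = nh(p+s-3)+(s-1) = r(K_p,(s-1)(K_1+nH))$ vertices by the induction hypothesis, so it contains either a red $K_p$ (contradiction) or a blue $(s-1)(K_1+nH)$, which together with $F$ yields a blue $s(K_1+nH)$. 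This completes the induction, and combining the two bounds gives equality.

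The only genuine point requiring care — and the place I would expect to spend the most attention — is verifying that the hypotheses on $n$ are preserved throughout the induction, i.e. that each intermediate application of Theorem~\ref{main} is legitimate with the \emph{same} value of $n$. Here $H$, $h$, $p$, and $\ell = r(K_p,H)$ are all fixed independently of $s$, and at every step of the induction we apply Theorem~\ref{main} with these unchanged parameters; the quantity $s$ only affects the \emph{number} of vertices we are working with, not the goodness hypothesis. So condition (1) $n\ge 2p\ell/h$ or condition (2) $n\ge\max\{(p+2)\ell/h,\ \ell-h\}$ need only be assumed once and it carries through. One should also check the trivial arithmetic identities used above — namely that $N-(nh+1)$ equals the Ramsey number predicted for $s-1$, and that the lower-bound construction has exactly $N-1$ vertices — but these are immediate. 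Finally, the connectedness of $H$ (assumed in the Corollary, though not in Theorem~\ref{main}) is exactly what guarantees $K_1+nH$ is connected, which is needed both for the "single clique" argument in the lower bound and implicitly for the notion of $p$-goodness to apply.
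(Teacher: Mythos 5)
Your proposal is correct and matches the paper's own (sketched) argument: the same lower-bound construction $K_{(nh+1)s-1}\cup (p-2)K_{nh}$ and the same upper bound by iterating Theorem~\ref{main}, with the arithmetic checks done properly. (Only a trivial aside: $K_1+nH$ is connected regardless of whether $H$ is, since the hub dominates $nH$, so that part of your final remark is not actually needed.)
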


Based on the existing results, Chung and Lin~\cite{Chung2022Fan} proposed several interesting research questions. One of these questions is finding a characterization for graphs $G$ that are $K_p$-good. In other words, what kind of graph $G$ satisfies $r(K_p,G)=(p-1)(|G|-1)+1$? Let us consider the other aspect of the problem: what kind of graph $G$ satisfies $K_1+nH$ is $G$-good for sufficiently large $n$? Specifically, we propose the following problem.

\begin{qu}
	Determine all the graphs $G$ such that $r(G,K_1+nH)=n|H|(\chi(G)-1)+1$ for sufficiently large $n$.
\end{qu}

In the final part of this section, we introduce some terminology and notation. We use $[p]$ to denote the set $\{1, 2, \ldots, p\}$. If a graph $G$ does not contain $G_1$ as a subgraph and its complement does not contain $G_2$ as a subgraph, then the graph $G$ is called a $(G_1, G_2)$-Ramsey graph. The independence number of a graph $G$ is denoted by $\alpha(G)$. The set of neighbors of a vertex $v$ in the graph $\overline{G}$ is denoted as $N_{\overline{G}}(v)$, and $d_{\overline{G}}(v)=|N_{\overline{G}}(v)|$. The subgraph induced by $V(G)\setminus U$ in $G$ is denoted as $G-U$. The symbol $e(U_1, U_2)$ represents the number of edges with one endpoint in $U_1$ and the other endpoint in $U_2$. When $U_1$ contains only one vertex $u_1$, it is denoted as $e(u_1, U_2)$.

\section{The structure of a $(K_p,K_1+nH)$-Ramsey graph}\label{secstructure}

Our structural analysis in this section follows a similar approach to that of Zhang and Chen~\cite{Zhang2014Ramsey}. To prove the upper bound $r(K_p,K_1+nH)\le (p-1)nh+1$, we proceed with an induction on $p$. The theorem is trivial for $p=1,2$. Assume $p\ge 3$ and that the upper bound holds for smaller values. Let $G$ be a graph with $(p-1)nh+1$ vertices, where \[n\ge \min\{2p\ell/h, \max\{((p+2)(\ell-h)-2)/h, \ell-h, p-1\}\}.\] We proceed by contradiction and assume that $G$ is a $(K_p,K_1+nH)$-Ramsey graph. Now we analyze the properties of the graph $G$. Firstly, we claim that the degree of every vertex in $\overline{G}$ falls within a small range.
\begin{prop}\label{property1}
	For every vertex $v$, we have $nh\le d_{\overline{G}}(v)\le (n-1)h+\ell-1$.
\end{prop}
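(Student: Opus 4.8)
The plan is to establish both inequalities by exploiting the assumption that $G$ is a $(K_p, K_1+nH)$-Ramsey graph, i.e., that $G$ is $K_p$-free and $\overline{G}$ is $(K_1+nH)$-free, together with the induction hypothesis on $p$.

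For the lower bound $d_{\overline{G}}(v) \ge nh$: I would argue by contradiction, supposing some vertex $v$ has $d_{\overline{G}}(v) \le nh-1$, so that $v$ is adjacent in $G$ to at least $(p-1)nh+1 - 1 - (nh-1) = (p-2)nh+1$ vertices. Consider $G' = G[N_G(v)]$, the subgraph of $G$ induced by the $G$-neighborhood of $v$. Since $v$ is joined to every vertex of $G'$ and $G$ is $K_p$-free, $G'$ must be $K_{p-1}$-free. By the induction hypothesis, $r(K_{p-1}, K_1+nH) \le (p-2)nh+1$, so since $|V(G')| \ge (p-2)nh+1$, the complement $\overline{G'}$ contains a copy of $K_1+nH$. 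But $\overline{G'}$ is an induced subgraph of $\overline{G}$, so $\overline{G}$ contains $K_1+nH$, a contradiction. This gives $d_{\overline{G}}(v) \ge nh$ for every $v$.

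For the upper bound $d_{\overline{G}}(v) \le (n-1)h + \ell - 1$: again argue by contradiction, assuming some $v$ has $d_{\overline{G}}(v) \ge (n-1)h + \ell$. Let $U = N_{\overline{G}}(v)$, so $|U| \ge (n-1)h + \ell$. Because $\ell = r(K_p, H)$ and $G$ is $K_p$-free, the complement of $G[U]$ contains a copy of $H$; delete its $h$ vertices from $U$ and repeat. Each deletion removes exactly $h$ vertices, and as long as the remaining set still has at least $\ell$ vertices we can extract another copy of $H$ in $\overline{G}$ inside it; since $|U| \ge (n-1)h + \ell$, we can carry this out $n$ times to find $n$ pairwise disjoint copies of $H$ in $\overline{G}[U]$, i.e., a copy of $nH$ inside $N_{\overline{G}}(v)$. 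Adding $v$, which is $\overline{G}$-adjacent to all of $U$, yields a copy of $K_1+nH$ in $\overline{G}$, a contradiction. Hence $d_{\overline{G}}(v) \le (n-1)h + \ell - 1$.

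The main obstacle, such as it is, lies in getting the counting exactly right in each direction and in making sure the induction is correctly set up: for the lower bound one must check that the hypothesis on $n$ is strong enough that the induction hypothesis at level $p-1$ is genuinely available (this is where the bound $n \ge p-1$ in the displayed hypothesis, or more precisely the form of the inductive statement, matters), and for the upper bound one must verify that the greedy extraction of $H$-copies really can be iterated $n$ times — i.e., that $(n-1)h + \ell$ vertices suffice, which follows since after $i$ deletions we have at least $(n-1-i)h + \ell \ge \ell$ vertices remaining for $i \le n-1$. Neither step requires more than the induction hypothesis and the definition of $\ell$, so I expect the proposition to be routine once these bookkeeping points are handled.
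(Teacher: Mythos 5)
Your proposal is correct and follows essentially the same route as the paper: the lower bound uses the induction hypothesis at level $p-1$ applied to the common $G$-neighborhood of $v$ (the paper phrases it contrapositively, extracting a $K_{p-1}$ that extends to $K_p$ with $v$), and the upper bound is exactly the paper's step $r(K_p,nH)\le (n-1)h+\ell$, which you simply prove inline by greedy extraction of disjoint $H$-copies from $N_{\overline{G}}(v)$.
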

\begin{proof}
	To prove the first inequality, suppose that $d_{\overline{G}}(v)\le nh-1$. By removing $v$ and all its neighbors in $\overline{G}$, we are left with at least $(p-2)nh+1$ remaining vertices. Let us denote the induced subgraph on these remaining vertices as $H_1$. By the inductive hypothesis, $H_1$ contains $K_{p-1}$ as a subgraph, which together with $v$ forms a $K_p$ in $G$, contradicting our assumption.
	
	Regarding the second inequality, suppose that $d_{\overline{G}}(v)\ge (n-1)h+\ell$. We denote the subgraph induced by all vertices not adjacent to $v$ as $H_2$. It follows from $r(K_p, H)\le \ell$ that $r(K_p, nH)\le (n-1)h+\ell$. So $\overline{H_2}$ contains $nH$ as a subgraph. Consequently, we obtain a $K_1+nH$ in $\overline{G}$ with $v$ as its hub, once again contradicting our assumption.
\end{proof}
From the above property we see that $nh\le (n-1)h+\ell-1$ which implies \[\ell\ge h+1,\] since otherwise our proof is done.

\begin{prop}\label{property2}
	$\alpha(G)\le nh-1$.
\end{prop}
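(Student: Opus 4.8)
The plan is to argue by contradiction: suppose $G$ contains an independent set $S$ with $|S| \ge nh$. Since $G$ has $(p-1)nh+1$ vertices, the complement of the induced subgraph on $V(G) \setminus S$ cannot contain $K_1 + nH$ (that would give $K_1+nH$ in $\overline G$), and of course $G - S$ contains no $K_p$; so $G - S$ is a $(K_p, K_1+nH)$-Ramsey graph on $(p-2)nh+1$ vertices. The point is that an independent set of size $\ge nh$ behaves like ``potential copies of $nH$'': if we can attach such an independent set to a suitable hub vertex, we build $K_1+nH$ in $\overline G$. So the first step is: for any vertex $v \notin S$ that is non-adjacent (in $G$) to many vertices of $S$, we try to find $nH$ inside $\overline{G[N_{\overline G}(v)]}$ using the fact that $S \cap N_{\overline G}(v)$ is a clique in $\overline G$ of size close to $nh$, together with the Ramsey-type bound $r(K_p, nH) \le (n-1)h + \ell$ applied to whatever lies outside $S$.

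Concretely, I would proceed as follows. First, every vertex $v \in V(G) \setminus S$ has, by Proposition~\ref{property1}, $d_{\overline G}(v) \ge nh$; I want to show in fact that $v$ must have a neighbor in $S$ with respect to $G$, i.e.\ $e(v, S) \ge 1$, and more quantitatively that $v$ cannot be non-adjacent to too few vertices of $S$. If $v$ were adjacent in $G$ to at most $h - 1$ vertices of $S$, then $N_{\overline G}(v) \supseteq S'$ where $|S'| \ge nh - (h-1) = (n-1)h + 1$, and $S'$ is a clique in $\overline G$; combined with $r(K_p,H) \le \ell$, one extracts $nH$ in $\overline{G}[N_{\overline G}(v)]$ (take $n-1$ disjoint copies of $H$ greedily from $S'$ treated as a clique — each $H$ needs only $h$ vertices and a clique contains every graph on $\le h$ vertices — and obtain the last copy from the remaining $\ge \ell$ vertices via $r(K_p,H) \le \ell$, noting $G-S'$-part has no $K_p$). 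That yields $K_1+nH$ in $\overline G$ with hub $v$, a contradiction. Hence every $v \notin S$ satisfies $e(v, S) \ge h$, so $e(V(G)\setminus S, S) \ge h \cdot |V(G)\setminus S| = h((p-1)nh + 1 - |S|)$.

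The second step is to count edges at $S$ from the other side. Each vertex $s \in S$ has $d_{\overline G}(s) \le (n-1)h + \ell - 1$ by Proposition~\ref{property1}, and $s$ is non-adjacent in $G$ to all of $S \setminus \{s\}$ (since $S$ is independent), i.e.\ to $|S| - 1$ vertices inside $S$; hence the number of $G$-neighbors of $s$ outside $S$ is at most $((p-1)nh + 1 - 1) - d_{\overline G}(s) - 0$ wait — more carefully, $e(s, V(G)\setminus S) = (|V(G)| - 1) - d_{\overline G}(s) - (|S| - 1) = (p-1)nh - d_{\overline G}(s) - |S| + 1 \le (p-1)nh - (|S|-1) - |S| + 1$ is not quite the bound I want; instead I use $d_{\overline G}(s) \ge nh$ is the wrong direction, so I should bound $e(s,V(G)\setminus S)$ from above by noting $d_G(s) = (p-1)nh - d_{\overline G}(s) \le (p-1)nh - nh = (p-2)nh$, and all neighbors of $s$ lie outside $S$, giving $e(S, V(G)\setminus S) \le (p-2)nh\,|S|$. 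Comparing with the lower bound from step one, $h((p-1)nh + 1 - |S|) \le (p-2)nh\,|S|$, and plugging $|S| \ge nh$ together with the hypothesis $n \ge p-1$ (which is among the lower bounds assumed for $n$ at the start of the section) should produce a contradiction after routine rearrangement.

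The main obstacle will be the greedy extraction of $nH$ from a near-clique in step one: one must be careful that removing $h$ vertices per copy of $H$ from the clique $S'$ of size $(n-1)h+1$ leaves exactly enough room, and that the ``last copy'' argument using $r(K_p,H) \le \ell$ is applied to a set of the right size — here the constraint $\ell \ge h+1$ derived just before the proposition, and the precise lower bounds on $n$ assumed at the start of Section~\ref{secstructure}, are what make the arithmetic close. I would also double-check whether the contradiction needs the full strength of $|S| \ge nh$ or whether a slightly weaker independent set already suffices, since that determines how much slack we have in the final inequality; if the straightforward count is too tight, the fallback is to also use that $G - S$ itself, being $(K_p, K_1+nH)$-Ramsey on $(p-2)nh+1$ vertices, forces its own degree constraints via Proposition~\ref{property1} applied inductively, tightening the edge count at $V(G)\setminus S$.
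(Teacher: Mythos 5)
Your proposal has two genuine gaps, one in each step. In Step 1, the extraction of the $n$-th copy of $H$ does not go through. After you remove the $(n-1)h$ vertices of the $n-1$ greedy copies taken from the clique $S'\subseteq N_{\overline G}(v)$, the leftover inside $N_{\overline G}(v)$ has size $d_{\overline G}(v)-(n-1)h$, which Proposition~\ref{property1} only guarantees to be at least $h$, and in fact bounds above by $\ell-1$ (since $d_{\overline G}(v)\le (n-1)h+\ell-1$); so the ``remaining $\ge\ell$ vertices'' to which you want to apply $r(K_p,H)\le\ell$ never exist -- this route is blocked precisely by the upper bound in Proposition~\ref{property1}. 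Nor does the leftover contain $H$ in $\overline G$ for free: it consists of one unused vertex of $S'$ together with up to $h-1$ vertices outside $S$ that may induce anything (for $H=K_3$, two outside vertices adjacent in $G$ already kill it). The only conclusion this style of argument yields is that a vertex with \emph{no} $G$-neighbor in $S$ gives a contradiction, i.e.\ $e(v,S)\ge 1$, not $e(v,S)\ge h$.

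In Step 2, even granting $e(v,S)\ge h$ for every $v\notin S$, the comparison you propose is not a contradiction: with $|S|=nh$ the inequality $h\bigl((p-1)nh+1-|S|\bigr)\le (p-2)nh\,|S|$ reads $(p-2)nh^2+h\le (p-2)n^2h^2$, which holds for all $n\ge 2$. No degree-sum count of this kind can work, because a complete $(p-1)$-partite graph with parts of size roughly $nh$ satisfies all the degree constraints of Proposition~\ref{property1} with room to spare while containing independent sets of size $nh$; what excludes such configurations must be structural, not numerical. The paper's proof is exactly such a structural argument: by the induction hypothesis $G-S$ (on $\ge (p-2)nh+1$ vertices) contains a $K_{p-1}$ with vertices $x_1,\dots,x_{p-1}$; since $G$ has no $K_p$, every vertex of $S$ misses some $x_i$, so by pigeonhole some $x_i$ has at least $nh/(p-1)\ge h$ non-neighbors in $S$ (here $n\ge p-1$ is used); one of these non-neighbors serves as the hub, $x_i$ together with $h-1$ further non-neighbors forms one copy of $H$ in $\overline G$, and the remaining $(n-1)h$ vertices of $S$ supply the other $n-1$ copies. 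You cite the $K_{p-1}$ in $G-S$ only as a vague fallback, but it is the essential ingredient; without it the proof does not close.
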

\begin{proof}
	If not, there exists a $K_{nh}$ in $\overline{G}$, which we denote as $H_3$. The induced subgraph $G-V(H_3)$ has $(p-2)nh+1$ vertices. According to the inductive hypothesis, it contains $K_{p-1}$ as a subgraph. We denote this subgraph as $H_4$ and its vertices as $x_1,\ldots, x_{p-1}$. Each vertex of $H_3$ is not adjacent to at least one of $x_1,\ldots, x_{p-1}$, since otherwise $G$ contains a $K_p$. Hence, there exists an $i$ with $i\in [p-1]$ such that $|V(H_3)\setminus N_G(x_i)|\ge nh/(p-1)\ge h$. We choose any vertex from $V(H_3)\setminus N_G(x_i)$ as a hub, and $h-1$ vertices in $V(H_3)\setminus N_G(x_i)$ together with $x_i$ form an $H$ in $\overline{G}$. The other vertices of $H_3$ contains $n-1$ disjoint copies of $H$ in $\overline{G}$. Consequently, $\overline{G}$ contains a $K_1+nH$, a contradiction.
\end{proof}

From the induction hypothesis, it can be observed that $G$ contains $K_{p-1}$ as its subgraph. Let us denote this subgraph as $H_5$, and its $p-1$ vertices as $u_1, \ldots, u_{p-1}$. For each $i$ with $i\in [p-1]$, we use $U_i$ to represent the set of vertices that are not adjacent to $u_i$ but are adjacent to every other vertex of $H_5$. Here, we assume that $u_i\in U_i$ for simplicity of notation. Let $U_p$ denote the set of vertices that are not adjacent to at least two vertices of $H_5$. It is easy to see that $U_1, \ldots, U_p$ form a partition of $V(G)$.

\begin{prop}\label{property3}
	For each $i\in [p-1]$, the graph induced by $U_i$ has no edges.
\end{prop}
\begin{proof}
	If not, say, $U_1$ has an edge with two ends $y_1$ and $y_2$. Then $\{y_1,y_2,u_2,u_3,\ldots,u_{p-1}\}$ forms a $K_p$ in $G$, resulting in a contradiction.
\end{proof}

\begin{prop}\label{property4}
	$|U_p|\le (p-1)(\ell-h)-1$ and $nh-(p-1)(\ell-h-1)+1\le |U_i|\le nh-1$ for $i\in [p-1]$. Moreover, for each $i\in [p-1]$, we have $\sum_{k=1}^{i}|U_k|\ge i(nh-1)-(p-1)(\ell-h-1)+2$.
\end{prop}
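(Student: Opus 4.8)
The plan is to obtain the two upper bounds first, and then read off the lower bound on each $|U_i|$ and the partial-sum estimate from them together with the identity $\sum_{i=1}^{p}|U_i|=(p-1)nh+1$.

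For $i\in[p-1]$, Proposition~\ref{property3} tells us that $U_i$ is independent in $G$, hence a clique in $\overline G$, so Proposition~\ref{property2} gives $|U_i|\le\alpha(G)\le nh-1$ immediately. To bound $|U_p|$ I would double-count, over $j\in[p-1]$, the number of vertices non-adjacent to $u_j$ in $G$, i.e.\ the quantity $\sum_{j=1}^{p-1}d_{\overline G}(u_j)$. Since $U_1,\dots,U_p$ partition $V(G)$ and $u_j\in U_j$, no vertex of $H_5$ lies in $U_p$; a vertex of $U_i\setminus\{u_i\}$ with $i\in[p-1]$ is non-adjacent to exactly one of $u_1,\dots,u_{p-1}$, namely $u_i$; the vertex $u_i$ itself is adjacent to every $u_j$ with $j\ne i$, hence contributes $0$; and each vertex of $U_p$ is non-adjacent to at least two of the $u_j$. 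This yields
\[
\sum_{j=1}^{p-1}d_{\overline G}(u_j)\ \ge\ \Bigl(\sum_{i=1}^{p-1}|U_i|-(p-1)\Bigr)+2|U_p|\ =\ (p-1)(nh-1)+1+|U_p|,
\]
while Proposition~\ref{property1} bounds the left-hand side by $(p-1)\bigl((n-1)h+\ell-1\bigr)$. Comparing the two inequalities gives $|U_p|\le(p-1)(\ell-h)-1$.

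For the lower bounds, the bound on $|U_p|$ converts $\sum_{i=1}^{p}|U_i|=(p-1)nh+1$ into $\sum_{k=1}^{p-1}|U_k|\ge(p-1)(nh-\ell+h)+2$. Given any $i\in[p-1]$, discarding from this sum the $p-1-i$ terms of index larger than $i$, each of which is at most $nh-1$, and simplifying, gives $\sum_{k=1}^{i}|U_k|\ge i(nh-1)-(p-1)(\ell-h-1)+2$, which is the ``moreover'' clause; discarding instead all $p-2$ terms other than a fixed index $i$ gives $|U_i|\ge nh-(p-1)(\ell-h-1)+1$. Together with $|U_i|\le nh-1$ and $|U_p|\le(p-1)(\ell-h)-1$ this is the whole statement.

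I expect the only genuinely delicate point to be the bookkeeping in the double-counting step: one must be careful that $u_1,\dots,u_{p-1}$ are excluded from $U_p$ and that each $u_i$ contributes $0$ rather than $1$ to $\sum_{j}d_{\overline G}(u_j)$, since this is exactly where the additive $+1$ and the term $-(p-1)$ in the displayed inequality come from. Everything else is routine arithmetic, and no combinatorial input beyond Propositions~\ref{property1}--\ref{property3} and the vertex count $|V(G)|=(p-1)nh+1$ is needed.
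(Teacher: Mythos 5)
Your proof is correct and essentially matches the paper's: your double count of $\sum_{j=1}^{p-1}d_{\overline G}(u_j)$ is exactly the paper's count of $\overline G$-edges between $V(H_5)$ and $V(G)\setminus V(H_5)$ (they coincide because $H_5$ is a clique), bounded above via Proposition~\ref{property1} and below via the definitions of the $U_i$, giving $|U_p|\le(p-1)(\ell-h)-1$. The remaining bounds are then obtained, as in the paper, from $|V(G)|=(p-1)nh+1$, $|U_i|\le nh-1$ (Propositions~\ref{property2} and~\ref{property3}), and routine arithmetic.
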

\begin{proof}
	To prove the first inequality, we count twice the number of edges between $V(H_5)$ and $V(G)\setminus V(H_5)$ in $\overline{G}$. By the definitions of $U_i$ for $i\in [p]$, $\overline{G}$ contains at least $|V(G)\setminus V(H_5)|+|U_p|=(p-1)(nh-1)+1+|U_p|$ edges between $V(H_5)$ and $V(G)\setminus V(H_5)$. Moreover, Property~\ref{property1} implies that $\overline{G}$ contains at most $|V(H_5)|((n-1)h+\ell-1)=(p-1)((n-1)h+\ell-1)$ edges between $V(H_5)$ and $V(G)\setminus V(H_5)$. Thus, $(p-1)(nh-1)+1+|U_p|\le (p-1)((n-1)h+\ell-1)$ and hence $|U_p|\le (p-1)(\ell-h)-1$.
	
	Combining Properties~\ref{property2} and \ref{property3} we have $|U_i|\le nh-1$ for $i\in [p-1]$. Further, for $i\in [p-1]$,
	\begin{align*}
		|U_i| &=|V(G)|-|U_p|-\sum_{j\in [p-1],\ j\neq i}|U_j| \\
		&\ge (p-1)nh+1-((p-1)(\ell-h)-1)-(p-2)(nh-1) \\
		&=nh-(p-1)(\ell-h-1)+1.
	\end{align*}
	\begin{align*}
		\sum_{k=1}^{i}|U_k|=&|V(G)|-\sum_{k=i+1}^{p}|U_k|\\
		\ge &(p-1)nh+1-(p-1-i)(nh-1)-((p-1)(\ell-h)-1)\\
		=&i(nh-1)-(p-1)(\ell-h-1)+2.\qedhere
	\end{align*}
\end{proof}

\section{Complete graphs versus generalized fans}\label{secgeneral}

Since $p-1$ disjoint copies of $K_{nh}$ contains no $K_1+nH$ and its complement contains no $K_p$, it follows that $r(K_p,K_1+nH)\ge (p-1)nh+1$. To prove the upper bound, our setup aligns precisely with Section~\ref{secstructure}: we shall employ both mathematical induction and proof by contradiction. Recall that $G$ is a $(K_p,K_1+nH)$-Ramsey graph of order $(p-1)nh+1$. We will derive a series of contradictions.

Now we divide the problem into three parts. We utilize different methods to prove that the equality $r(K_p,K_1+nH)=(p-1)nh+1$ holds for different ranges of $n$. Specifically, it holds for $n\ge 3p\ell/h$ in the first part, for $n\ge 2p\ell/h$ in the second part, and for $n\ge \max\{((p+2)(\ell-h)-2)/h, \ell-h, p-1\}$ in the last part. Since \[\max\{(p+2)\ell/h, \ell-h\}\ge \max\{((p+2)(\ell-h)-2)/h, \ell-h, p-1\},\] the last part actually establishes a broader range for $n$ compared to the range of $n$ in Condition (2) of Theorem~\ref{main}.
\subsection{$n\ge 3p\ell/h$}

By the following theorem, we can quickly deduce that $\delta(G)$ cannot be large, which contradicts Property~\ref{property1}.
\begin{thm}[Andr{\'a}sfai, Erd{\H{o}}s, and S{\'o}s~\cite{Andrasfai1974connection}]\label{thmaes}
	Let $p\ge 3$. For any graph $G$, at most two of the following properties can hold:
	\begin{enumerate}[(1).]
		\item $K_{p}$ is not a subgraph of $G$,
		\item $\delta(G)>\frac{3p-7}{3p-4}|V(G)|$,
		\item $\chi(G)\ge p$.
		\end{enumerate}
\end{thm}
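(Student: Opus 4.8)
The statement is equivalent to the following homomorphism form, which I would prove directly: \emph{if $G$ is a $K_p$-free graph on $n$ vertices with $\delta(G)>\frac{3p-7}{3p-4}n$, then $G$ admits a homomorphism to $K_{p-1}$} (so in particular $\chi(G)\le p-1$, and properties (1), (2) together forbid (3)). I would induct on $p$, the cases $p\le 2$ being trivial, keeping the homomorphism conclusion in the inductive hypothesis so that a fresh colour class can be appended for free at each step.

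\emph{Base case $p=3$.} One must show that a triangle-free graph $G$ with $\delta(G)>\frac{2n}{5}$ is bipartite. If not, take a shortest odd cycle $C=x_0x_1\cdots x_{2k}$, so $2k+1\ge 5$; minimality makes $C$ induced, and the standard detour argument --- a vertex adjacent to $x_i$ and $x_j$ lies on two cycles whose lengths sum to $2k+5$, so one is odd and, being at least $2k+1$, forces $i,j$ to be at cyclic distance exactly $2$ --- shows every vertex of $G$ has at most two neighbours on $C$. Double counting the edges meeting $V(C)$ then gives
\[
(2k+1)\,\delta(G)\le\sum_{x\in V(C)}d_G(x)=2e(G[C])+e\bigl(V(C),V(G)\setminus V(C)\bigr)\le 2(2k+1)+2\bigl(n-(2k+1)\bigr)=2n,
\]
hence $\delta(G)\le\frac{2n}{2k+1}\le\frac{2n}{5}$, a contradiction.

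\emph{Inductive step $p\ge 4$.} It suffices to find an independent set $S\subseteq V(G)$ with $G-S$ being $K_{p-1}$-free and still satisfying $\delta(G-S)>\frac{3(p-1)-7}{3(p-1)-4}\,|V(G-S)|$; the inductive hypothesis then gives a homomorphism $G-S\to K_{p-2}$, which extends to $G\to K_{p-1}$ by sending $S$ to the extra vertex. If $\omega(G)\le p-2$ we may take $S=\emptyset$: then $G$ is already $K_{p-1}$-free and $\delta(G)>\frac{3p-7}{3p-4}n>\frac{3(p-1)-7}{3(p-1)-4}n$, so induction applies to $G$ itself. Otherwise $\omega(G)=p-1$; fix a maximum clique $K=\{v_1,\dots,v_{p-1}\}$ and partition $V(G)=W_0\cup W_1\cup\dots\cup W_{p-1}$, where $W_i$ ($i\ge 1$) is $v_i$ together with all vertices non-adjacent to $v_i$ but adjacent to every other vertex of $K$, and $W_0$ is the set of vertices non-adjacent to at least two vertices of $K$. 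Each $W_i$ ($i\ge 1$) is independent, since an edge inside it would complete a $K_p$ with $K\setminus\{v_i\}$, and double counting the non-edges between $K$ and $V(G)\setminus K$ against $d_G(v)>\frac{3p-7}{3p-4}n$ yields $|W_0|<\frac{n}{3p-4}$. Taking $S$ to be a smallest of $W_1,\dots,W_{p-1}$ --- so $|S|\le\frac{n}{p-1}<\frac{3n}{3p-4}$ --- a short computation shows $\delta(G-S)\ge\delta(G)-|S|$ still clears the threshold $\frac{3(p-1)-7}{3(p-1)-4}|V(G-S)|$.

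The step I expect to be the main obstacle is proving that $G-S$ is $K_{p-1}$-free, i.e.\ that no maximum clique of $G$ avoids the chosen class $S$; this is exactly where the exceptional set $W_0$ must be reined in, and where the precise constant $\frac{3p-7}{3p-4}$ is unavoidable (balanced blow-ups of $C_5$ for $p=3$, and their iterated versions for larger $p$, show it cannot be lowered). I expect one cannot simply delete $W_i$ but must first absorb a few vertices of $W_0$ into $S$ (or re-choose $S$), so that after the deletion both the clique condition and the degree condition hold simultaneously; squeezing this out of the bound $|W_0|<\frac{n}{3p-4}$ rather than any cruder estimate is the real content of the argument.
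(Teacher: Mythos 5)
Note first that the paper does not prove this statement at all: it is quoted as a classical result of Andr\'asfai, Erd\H{o}s and S\'os, so your proposal has to stand entirely on its own. Your base case $p=3$ is the standard shortest-odd-cycle argument and is fine, and your bookkeeping in the inductive step is also correct as far as it goes: the double count does give $|W_0|<\frac{n}{3p-4}$, and deleting an independent set $S$ preserves the degree hypothesis for $p-1$ exactly when $|S|<\frac{3n}{3p-4}$, which a smallest class $W_i$ satisfies. Moreover, any $K_{p-1}$ that avoids $W_0$ must take exactly one vertex from each of $W_1,\dots,W_{p-1}$ (they are independent and partition $V(G)\setminus W_0$), so such cliques do meet $S=W_i$.

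The genuine gap is the one you yourself flag, and it is not a technicality but the entire content of the theorem beyond the triangle-free case: you have no argument that $G-S$ is $K_{p-1}$-free, i.e.\ no way to handle the $(p-1)$-cliques that pass through $W_0$. The repairs you suggest do not work as stated. Absorbing $W_0$ (or any sizeable part of it) into $S$ is not legitimate: vertices of $W_0$ may be adjacent to vertices of $W_i$ and to each other, so independence of $S$ is lost, and even ignoring that, the degree budget fails, since the induction needs $|S|<\frac{3n}{3p-4}$ while $\frac{n}{p-1}+\frac{n}{3p-4}>\frac{3n}{3p-4}$ for every $p\ge 4$ (indeed $\frac{1}{p-1}<\frac{2}{3p-4}$ would force $p<2$), and the refined bound $|S|\le\frac{n-|W_0|}{p-1}$ does not save this. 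Note also that the existence of the object you want --- a small independent set meeting every $K_{p-1}$ --- is exactly what one gets \emph{a posteriori} from a proper $(p-1)$-colouring, so producing it directly from one fixed maximum clique is essentially equivalent to proving the theorem; the known proofs of the general case (including the original one) have to work considerably harder here, e.g.\ by exploiting that the common neighbourhood of every $(p-3)$-clique is a triangle-free graph of order greater than $\frac{5n}{3p-4}$ with minimum degree above $\frac{2}{5}$ of its order, hence bipartite, and then analysing the global structure, rather than by deleting one class of the clique-partition. As written, the inductive step is therefore incomplete.
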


From Property~\ref{property1}, we know that $\delta(G)\ge (p-2)nh+h-\ell+1$. When $n\ge 3p\ell/h$, it is easy to verify that $\delta(G)/|V(G)|>(3p-7)/(3p-4)$. Thus, the second property of Theorem~\ref{thmaes} is satisfied. Additionally, based on our assumption, the first property of the theorem is also met. According to Theorem~\ref{thmaes}, we conclude that $\chi(G)\le p-1$. Therefore, $G$ is a multipartite graph with at most $p-1$ parts. By the pigeonhole principle, there must exist a part with at least $nh+1$ vertices. These vertices in $\overline{G}$ form a complete graph which contains $K_1+nH$ as a subgraph, leading to a contradiction.

\subsection{$n\ge 2p\ell/h$}

In the previous section, we partitioned $V(G)$ into $p$ parts $U_1, \ldots ,U_p$. Now we proceed to analyze the property of $U_p$.
\begin{claim}\label{claimUp}
	For each vertex $u\in U_p$, there exists an $i\in[p-1]$ such that $e(u, U_i)=0$.
\end{claim}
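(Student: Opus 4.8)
The plan is to argue by contradiction: assume the claim fails for some $u\in U_p$, so that $e(u,U_i)\ge 1$ for every $i\in[p-1]$. Since $u\in U_p$, let $J\subseteq[p-1]$ be the set of indices $j$ with $u_j\notin N_G(u)$, so $|J|\ge 2$, and for each $i\in J$ fix a vertex $w_i\in U_i\cap N_G(u)$ (possible since $e(u,U_i)\ge 1$). In either case below the goal is to produce a $K_p$ in $G$ or a $K_1+nH$ in $\overline G$, contradicting that $G$ is a $(K_p,K_1+nH)$-Ramsey graph.

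First I would isolate the mechanism producing a $K_p$ in $G$. Put $Q=\{u\}\cup\{w_i:i\in J\}\cup\{u_j:j\in[p-1]\setminus J\}$, a set of exactly $p$ vertices. Every pair inside $Q$ is an edge of $G$ except possibly the pairs among $\{w_i:i\in J\}$: indeed $uw_i\in E(G)$ as $w_i\in N_G(u)$, $uu_j\in E(G)$ for $j\notin J$ by the definition of $J$, $w_iu_j\in E(G)$ whenever $i\in J$ and $j\notin J$ since then $w_i\in U_i$ and $i\neq j$, and $\{u_j:j\notin J\}$ spans a clique of $H_5$. Hence if for some choice of the $w_i$ ($i\in J$) the set $\{w_i:i\in J\}$ spans a clique of $G$, then $Q$ is a $K_p$ in $G$. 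So no such choice exists, and since each $U_i$ spans no edge of $G$ by Proposition~\ref{property3}, this says precisely that $G\bigl[\bigcup_{i\in J}(U_i\cap N_G(u))\bigr]$ contains no $K_{|J|}$.

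Next I would convert this absence of a $G$-clique into the presence of a large clique of $\overline G$. In the base case $|J|=2$, say $J=\{a,b\}$, there is no edge of $G$ between $U_a\cap N_G(u)$ and $U_b\cap N_G(u)$, so by Proposition~\ref{property3} the set $(U_a\cap N_G(u))\cup(U_b\cap N_G(u))$ spans a clique of $\overline G$ of order $e(u,U_a)+e(u,U_b)$. Dually, $\{u\}\cup(U_a\setminus N_G(u))\cup(U_b\setminus N_G(u))$ spans a clique of $\overline G$ whenever there is no edge of $G$ between $U_a\setminus N_G(u)$ and $U_b\setminus N_G(u)$, because $u$ is adjacent in $\overline G$ to every vertex outside $N_G(u)\cup\{u\}$. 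Estimating $e(u,U_i)$, $|U_i|$, and $d_{\overline G}(u)$ by Propositions~\ref{property1} and~\ref{property4}, and absorbing into one of these two cliques the few further vertices of $U_p$ and of the sets $U_i\setminus N_G(u)$ that happen to be adjacent in $\overline G$ to all of it, I would reach a clique of $\overline G$ on at least $nh+1$ vertices; such a clique contains $K_{nh+1}\supseteq K_1+nH$, the desired contradiction. For $|J|\ge 3$ I would run the same argument on the large set $\bigcup_{i\in J}(U_i\cap N_G(u))$, now using its $K_{|J|}$-freeness together with $r(K_{|J|},nH)\le(n-1)h+r(K_{|J|},H)\le(n-1)h+\ell$.

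The hard part is the last size estimate, and this is where the hypothesis $n\ge 2p\ell/h$ is really used: the crude inequalities only give cliques of order about $nh-p(\ell-h-1)$, which falls just short of $nh+1$ once $\ell>h+1$, so one must carefully harvest the missing $p(\ell-h-1)+1$ vertices from $U_p$ and from the sets $U_i\setminus N_G(u)$, leaning on $\alpha(G)\le nh-1$ and on $d_{\overline G}(v)\ge nh$ for every $v$. Making this bookkeeping airtight---in particular in the tightest instances, small $p$ and $\ell$ close to $h$---is where the real effort goes.
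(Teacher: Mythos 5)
Your opening reduction is sound: with $J=\{j\in[p-1]:u_ju\notin E(G)\}$ (so $|J|\ge 2$), the set $Q=\{u\}\cup\{w_i:i\in J\}\cup\{u_j:j\notin J\}$ is a $K_p$ unless $G\bigl[\bigcup_{i\in J}(U_i\cap N_G(u))\bigr]$ is $K_{|J|}$-free, and this is a legitimate variant of the paper's transversal-clique idea (the paper instead builds $u,w_1,\dots,w_{p-1}$ greedily with one $w_i$ in every $U_i$). The genuine gap is everything after that: the step where the hypothesis $n\ge 2p\ell/h$ must enter is not carried out, and the route you sketch for it does not look repairable. You aim for a clique of $\overline{G}$ of order $nh+1$, but (i) your own estimates leave a deficit of order $p(\ell-h)$, which is unbounded, and the proposed ``absorption'' needs vertices of $U_p$ or of $U_i\setminus N_G(u)$ that are $\overline{G}$-adjacent to \emph{every} vertex of the clique already built --- nothing in Propositions~\ref{property1}--\ref{property4} gives any such vertices; (ii) in the case $|J|=2$ your second option is only conditional (``whenever there is no $G$-edge between $U_a\setminus N_G(u)$ and $U_b\setminus N_G(u)$''), that condition is never established and need not hold, so there is no actual dichotomy; (iii) for $|J|\ge 3$, applying $r(K_{|J|},nH)\le (n-1)h+\ell$ to $S=\bigcup_{i\in J}(U_i\cap N_G(u))$ would at best place $nH$ inside $\overline{G}[S]$, but $S\subseteq N_G(u)$, so $u$ cannot serve as hub and no other hub is supplied; moreover the crude lower bound on $|S|$ again falls short by about $|J|(p-1)(\ell-h)$.

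The idea you are missing --- and it is how the paper closes the argument --- is that one never needs a $K_{nh+1}$ in $\overline{G}$: a $K_1+nH$ only needs a hub plus $n$ disjoint copies of $H$ that may be spread over \emph{several} independent sets. Concretely, ordering the parts by $|U_i\setminus N_G(u)|$, if at stage $i$ one had $|U_i\setminus N_G(u)|\ge nh/i+(h-1)$, then each of $U_1\setminus N_G(u),\dots,U_i\setminus N_G(u)$ contains $\lceil n/i\rceil$ disjoint copies of $H$ in $\overline{G}$ and $u$ itself is the hub of a $K_1+nH$; and if a previously chosen $w_k$ had more than $(n+2)h-2-|U_k|$ non-neighbours in $U_i$, then $w_k$ is a hub, with copies of $H$ harvested from $U_k\setminus\{w_k\}$ and from $U_i\setminus N_G(w_k)$. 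These two bounds, combined with the lower bound on $\sum_{k=1}^{i}|U_k|$ from Proposition~\ref{property4} and $n\ge 2p\ell/h$, show that $U_i\cap N_G(u)\cap N_G(w_1)\cap\cdots\cap N_G(w_{i-1})\neq\emptyset$, so the greedy clique reaches $p$ vertices. Your write-up explicitly defers exactly this quantitative step (``where the real effort goes''), so as it stands the proposal is an unproved reduction, and its target ($K_{nh+1}\subseteq\overline{G}$) is stronger than what the structural information can deliver.
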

\begin{proof}
	Suppose to the contrary that there exists a vertex $u\in U_p$ satisfying that $e(u, U_i)>0$ for every $i\in[p-1]$. Without loss of generality, assume that $|U_1\setminus N_G(u)|\ge |U_2\setminus N_G(u)|\ge \cdots \ge |U_{p-1}\setminus N_G(u)|$.	We will use induction to select a vertex $w_1$ from $U_1$, a vertex $w_2$ from $U_2$, and so on, until we select a vertex $w_{p-1}$ from $U_{p-1}$, such that $u,w_1,w_2,\ldots, w_{p-1}$ form a clique. Since $e(u, U_1)>0$, we can choose a vertex $w_1$ from $U_1\cap N_G(u)$. For $i\ge 2$, assuming we have found $w_1,w_2,\ldots, w_{i-1}$, we will find the required vertex $w_i$ in $U_i$.
	
	If $|U_i\setminus N_G(u)|\ge nh/i+(h-1)$, by our assumption, $|U_k\setminus N_G(u)|\ge nh/i+(h-1)$ for each $k\in [i]$. Consequently, $\overline{G}$ contains $\cei{n/i}$ disjoint copies of $H$ in $U_k\setminus N_G(u)$ for each $k\in [i]$. It follows that $\overline{G}$ contains a $K_1+nH$ with $u$ as its hub, contradicting our assumption. Thus, we assume that $|U_i\setminus N_G(u)|< nh/i+(h-1)$.
	
	For each $k$ with $k\in [i-1]$, we show that $|U_i\setminus N_G(w_k)|\le (n+2)h-2-|U_k|$. If not, $\overline{G}$ contains $\flo{(|U_k|-1)/h}$ disjoint copies of $H$ in $U_k\setminus \{w_k\}$, and $\flo{((n+2)h-1-|U_k|)/h}$ disjoint copies of $H$ in $U_i\setminus N_G(w_k)$. Since $\flo{(|U_k|-1)/h}\ge (|U_k|-1)/h-(1-1/h)$ and $\flo{((n+2)h-1-|U_k|)/h}\ge ((n+2)h-1-|U_k|)/h-(1-1/h)$, adding these two numbers together we obtain at least $n$ disjoint copies of $H$ in $N_{\overline{G}}(w_k)$. Thus, $\overline{G}$ contains a $K_1+nH$ with $w_k$ as its hub, a contradiction which implies that $|U_i\setminus N_G(w_k)|\le (n+2)h-2-|U_k|$ for each $k\in [i-1]$.
		
	Combining Property~\ref{property4}, the inequalities $n\ge 2p\ell/h$, $\ell\ge h+1$, $p\ge i+1$, and $i\ge 2$, we have:
	\begin{align*}
		&|U_i\cap N_G(u) \cap N_G(w_1)\cap \cdots \cap N_G(w_{i-1})|\\
		\ge &|U_i|-|U_i\setminus N_G(u)|-|U_i\setminus N_G(w_1)|-\cdots-|U_i\setminus N_G(w_{i-1})| \\
		> &|U_i|-(nh/i+(h-1))-((n+2)h-2-|U_1|)-\cdots-((n+2)h-2-|U_{i-1}|) \\
		=&\sum_{k=1}^{i}|U_k|-(nh/i+(h-1))-(i-1)((n+2)h-2)\\
		\ge &i(nh-1)-(p-1)(\ell-h-1)+2-(nh/i+h-1)-(i-1)(nh+2h-2)\\
		=&nh(1-1/i)-i(2h-1)-(p-1)(\ell-h-1)+h+1\\
		\ge &2p\ell(1-1/i)-i(2h-1)-(p-1)(\ell-h-1)+h+1\\
		\ge &2p(h+1)(1-1/i)-i(2h-1)+h+1\\
		\ge &2(i+1)(h+1)(1-1/i)-i(2h-1)+h+1\\
		=&h(1-2/i)+3i+1-2/i\\
		> & 0
	\end{align*}

	In this way, we can find $w_i$ in $U_i$, which is connected to $u,w_1,\ldots,w_{i-1}$. The iterative process allows us to construct a clique with $p$ vertices. Consequently, this proves that our assumption is incorrect. As a result, the claim is proven.
\end{proof}

For each vertex $u$ in $U_p$, if there exist $i,j\in [p-1]$ with $i\neq j$ such that $e(u,U_i)=e(u,U_j)=0$, then it follows from $|U_i|+|U_j|\ge 2(nh-1)-(p-1)(\ell-h-1)+2$ that  $\overline{G}$ contains a $K_1+nH$ centered at $u$, where the $n$ disjoint copies of $H$ are distributed among $U_i$ and $U_j$. This contradicts the assumption. Therefore, for each vertex $u\in U_p$, there exists a unique $i\in [p-1]$ such that $e(u, U_i)=0$. We partition $U_p$ into $p-1$ parts, $W_1,\ldots, W_{p-1}$. For every vertex $u$ of $U_p$, if $e(u, U_i)=0$, we assign $u$ to $W_i$. Since $G$ has $(p-1)nh+1$ vertices, there must exist an $i\in [p-1]$ such that $|U_i|+|W_i|\ge nh+1$. Without loss of generality, let us assume $|U_1|+|W_1|\ge nh+1$.

\begin{claim}\label{claim6}
	$W_1$ forms an independent set.
\end{claim}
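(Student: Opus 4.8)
The plan is to argue by contradiction: suppose $W_1$ contains an edge $uu'$ of $G$. Since $W_1\subseteq U_p$ and, by the way $W_1$ was formed, $e(u,U_1)=e(u',U_1)=0$, both $u$ and $u'$ are non-adjacent in $G$ to every vertex of $U_1$ (in particular to $u_1$), while $uu'\in E(G)$. Recall also that, as noted just before the claim, $e(u,U_j)>0$ and $e(u',U_j)>0$ for every $j\in\{2,\dots,p-1\}$. The aim is to manufacture a $K_p$ in $G$, contradicting that $G$ is a $(K_p,K_1+nH)$-Ramsey graph.

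First I would record the ``smallness'' consequences forced by the Ramsey hypothesis. As $e(u,U_1)=0$, the clique $U_1$ of $\overline G$ — which has at least $nh-(p-1)(\ell-h-1)+1$ vertices — lies entirely inside $N_{\overline G}(u)$; and since each $U_j$ is independent in $G$, each $U_j\setminus N_G(u)$ is again a clique of $\overline G$, contained in $N_{\overline G}(u)$ and disjoint from $U_1$. Hence, were the sets $U_j\setminus N_G(u)$ ($j\in\{2,\dots,p-1\}$) jointly too large, $U_1$ together with them would already carry $n$ disjoint copies of $H$ and thus give a $K_1+nH$ centred at $u$ in $\overline G$; ruling this out yields $\sum_{j=2}^{p-1}|U_j\setminus N_G(u)|<(p-1)\ell$, and symmetrically $\sum_{j=2}^{p-1}|U_j\setminus N_G(u')|<(p-1)\ell$. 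I would also reuse the estimate from the proof of Claim~\ref{claimUp}: for $w_k\in U_k$ with $k\in\{2,\dots,p-1\}$ one has $|U_i\setminus N_G(w_k)|\le(n+2)h-2-|U_k|$ for every $i$, again because otherwise a fan appears at $w_k$ in $\overline G$.

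The core construction is then to select, for $i=2,3,\dots,p-1$ in turn, a vertex $w_i\in U_i$ adjacent in $G$ to $u$, to $u'$, and to all previously chosen $w_2,\dots,w_{i-1}$; this exhibits the $p$-clique $\{u,u',w_2,\dots,w_{p-1}\}$ of $G$. At step $i$ the number of admissible vertices is at least
\[
|U_i|-|U_i\setminus N_G(u)|-|U_i\setminus N_G(u')|-\sum_{k=2}^{i-1}|U_i\setminus N_G(w_k)|\,,
\]
and inserting the bounds above together with the lower bound on $\sum_{k\le i}|U_k|$ from Property~\ref{property4} reduces the whole argument to verifying an explicit inequality of the shape $nh>(\text{a quantity in }p,\ell,h)$.

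The step I expect to be the main obstacle is exactly this last inequality, within the range $n\ge 2p\ell/h$. Bounding $|U_i\setminus N_G(u)|$ and $|U_i\setminus N_G(u')|$ individually by about $(p-1)(\ell-h-1)$ costs some $3(p-1)(\ell-h-1)$ overall, which can exceed $nh$ (recall $nh\ge 2p\ell$) and is therefore too wasteful; the point is to use instead the sharper sum bounds $\sum_j|U_j\setminus N_G(u)|<(p-1)\ell$ (so only boundedly many $U_j$ can be ``expensive'' for $u$, and likewise for $u'$) and to process $U_2,\dots,U_{p-1}$ in a well-chosen order. It is also convenient to exploit the independent-set bound coming from Property~\ref{property2}: since $U_1$ is independent in $G$ and has no $G$-edge to $W_1$, the set $U_1\cup I$ is independent in $G$ for every independent set $I$ of $G[W_1]$, whence $\alpha(G[W_1])\le nh-1-|U_1|$; this forces the minimum vertex-cover number of $G[W_1]$ to be at least $2$ and can be used to trim the case analysis. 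Making the total loss stay below $nh$ by combining these ingredients is where the real work lies.
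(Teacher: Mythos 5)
Your overall strategy coincides with the paper's: assume an edge $y_1y_2$ (your $uu'$) inside $W_1$, and greedily choose $w_i\in U_i$ for $i=2,\dots,p-1$ adjacent to both endpoints and to all previously chosen vertices, where the admissible-vertex count uses non-neighbour bounds obtained by forbidding a $K_1+nH$ centred at the relevant vertex, together with Property~\ref{property4}. However, the decisive step --- verifying that the common neighbourhood inside $U_i$ is nonempty when $n\ge 2p\ell/h$ --- is exactly what you do not carry out: you call it ``the main obstacle'', propose additional ingredients (the aggregated bound $\sum_{j\ge 2}|U_j\setminus N_G(u)|<(p-1)\ell$, a ``well-chosen order'' of the parts, and the observation $\alpha(G[W_1])\le nh-1-|U_1|$), and close by saying that making the total loss stay below $nh$ ``is where the real work lies''. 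As written, the quantitative core of the argument is missing, so the claim is not proved.

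Moreover, the concern that drives your detour is unfounded, because you overlook that \emph{both} endpoints of the edge lie in $W_1$ and hence have no $G$-neighbours in $U_1$; the fan-at-the-endpoint argument therefore gives $|U_i\setminus N_G(y_j)|\le (n+2)h-2-|U_1|$ for $j=1,2$, with the same large part $U_1$ entering both bounds. Keeping these two $+|U_1|$ terms symbolic (instead of substituting the lower bound on $|U_1|$ into each of them and then separately bounding $\sum_{k=2}^{i}|U_k|$, which is what creates your ``$3(p-1)(\ell-h-1)$'' cost), the count at step $i$ becomes
\[
|U_i|-|U_i\setminus N_G(y_1)|-|U_i\setminus N_G(y_2)|-\sum_{k=2}^{i-1}|U_i\setminus N_G(w_k)|
\ \ge\ \sum_{k=1}^{i}|U_k|+|U_1|-i\bigl((n+2)h-2\bigr)
\ \ge\ nh-2(p-1)(\ell-h-1)-2ih+i+3,
\]
so only two losses of $(p-1)(\ell-h-1)$ occur (one from Property~\ref{property4}, one from the lower bound on $|U_1|$); with $nh\ge 2p\ell$, $\ell\ge h+1$ and $i\le p-1$ this is at least $2h+3i+5>0$, exactly as in the proof of Claim~\ref{claimUp}. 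Hence the sharper sum bounds, the reordering of $U_2,\dots,U_{p-1}$ and the vertex-cover remark are unnecessary, and since you never actually combine them into a verified estimate, the gap in your proposal remains.
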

\begin{proof}
If not, there must exist an edge in $G[W_1]$, denoted as $y_1y_2$. We then select one vertex each from $U_2, \ldots, U_{p-1}$, denoted as $w_2,\ldots, w_{p-1}$, respectively, such that $y_1,y_2,w_2,\ldots, w_{p-1}$ form a clique. We adopt an inductive approach, assuming that we have already found $w_2,\ldots, w_{i-1}$, and then we proceed to find $w_i$ within $U_i$. Using the same proof method as the claim above, we know that for $|U_i\setminus N_G(y_1)|\le (n+2)h-2-|U_1|$, $|U_i\setminus N_G(y_2)|\le (n+2)h-2-|U_1|$, and $|U_i\setminus N_G(w_k)|\le (n+2)h-2-|U_k|$ for each $k$ where $k\in [2,i-1]$.

Combining Property~\ref{property4}, the inequalities $n\ge 2p\ell/h$, $\ell\ge h+1$, $p\ge i+1$, and $i\ge 2$, we have:
\begin{align*}
	&|U_i\cap N_G(y_1) \cap N_G(y_2)\cap N_G(w_2)\cap \cdots \cap N_G(w_{i-1})|\\
	\ge &|U_i|-|U_i\setminus N_G(y_1)|-|U_i\setminus N_G(y_2)|-|U_i\setminus N_G(w_2)|-\cdots-|U_i\setminus N_G(w_{i-1})| \\
	\ge &|U_i|-2((n+2)h-2-|U_1|)-((n+2)h-2-|U_2|)-\cdots-((n+2)h-2-|U_{i-1}|) \\
	=&\sum_{k=1}^{i}|U_k|+|U_1|-i((n+2)h-2)\\
	\ge &i(nh-1)-(p-1)(\ell-h-1)+2+(nh-(p-1)(\ell-h-1)+1)-i(nh+2h-2)\\
	=&nh-2(p-1)(\ell-h-1)-2ih+i+3\\
	\ge &2p\ell-2(p-1)(\ell-h-1)-2ih+i+3\\
	= &2ph+2p+2\ell-2h-2ih+i+1\\
	\ge &2h+3i+5\\
	> & 0
\end{align*}

In this way, we can find $w_i$ in $U_i$, which is connected to $y_1, y_2, w_1, \ldots, w_{i-1}$. Through such iteration, we can find a clique with $p$ vertices. This demonstrates that our assumption is incorrect, and thus $W_1$ forms an independent set.
\end{proof}

Since both $U_1$ and $W_1$ are independent sets, and $e(U_1,W_1)=0$, the union $U_1 \cup W_1$ forms an independent set with at least $nh+1$ vertices. Consequently, $\overline{G}$ contains a $K_{nh+1}$ as a subgraph, which implies that $\overline{G}$ must also contain a $K_1+nH$, contradicting our assumption. This proves our theorem.

\subsection{$n\ge \max\{((p+2)(\ell-h)-2)/h, \ell-h, p-1\}$}\label{subsection3}

Recall that $U_1, \ldots, U_p$ form a partition of $V(G)$, $|U_p|\le (p-1)(\ell-h)-1$, and each $U_i$ forms an independent set for $i\in [p-1]$. Without loss of generality, assume that $|U_1|\ge |U_2|\ge \cdots\ge |U_{p-1}|$. By the pigeonhole principle, we have $|U_1|\ge (|V(G)|-|U_{p}|)/(p-1)$. Hence \[|U_1|\ge nh-\ell+h+1.\]

Now we define a subset $U'_1$ in $V(G)\setminus U_1$. For any vertex in $V(G)\setminus U_1$, if it is not adjacent to any vertex in $U_1$, then it belongs to $U'_1$. Note that $U'_1$ may be empty. If $|U'_1|\ge nh+1-|U_1|$, then we can take all the vertices in $U_1$ and $nh+1-|U_1|$ vertices in $U'_1$, which induce a subgraph in $\overline{G}$ that contains $K_1+nH$ as a subgraph. This is because each vertex in $U'_1$, together with $h-1$ vertices in $U_1$, forms an independent set, thus containing $H$ as a subgraph in $\overline{G}$. Since $n+h\ge \ell$, we have $|U_1|-1\ge n(h-1)$. It follows that $|U_1|-1\ge (h-1)(nh+1-|U_1|)$. Thus, the $nh+1-|U_1|$ vertices in $U'_1$ can form $nh+1-|U_1|$ disjoint independent sets of size $h$, together with $(h-1)(nh+1-|U_1|)$ vertices in $U_1$, which induce $(nh+1-|U_1|)H$ as a subgraph in $\overline{G}$. The remaining $|U_1|-(h-1)(nh+1-|U_1|)=(|U_1|-nh+n-1)h+1$ vertices in $U_1$ form an independent set, thus inducing $(|U_1|-nh+n-1)H$ as a subgraph in $\overline{G}$. There is one more vertex that is not adjacent to any vertex in the obtained $nH$ subgraph, and it can serve as the hub of $K_1+nH$. Thus, we obtain $K_1+nH$ in $\overline{G}$, which is a contradiction. This contradiction implies that $|U'_1|\le nh-|U_1|$. Therefore, $|U_1|+|U'_1|\le nh$.

There are at least $(p-2)nh+1$ vertices in $G-(U_1\cup U'_1)$. By the induction hypothesis, $G-(U_1\cup U'_1)$ contains $K_{p-1}$ as a subgraph. Let us denote these $p-1$ vertices as $v_1,v_2,\ldots,v_{p-1}$. In the following, we want to prove that there exists a vertex in $U_1$ that is adjacent to $v_1,v_2,\ldots,v_{p-1}$. This implies that $G$ contains $K_p$ as a subgraph.

\begin{claim}\label{claim8}
	For $k\in [p-1]$, the number of non-neighbors of $v_k$ in $U_1$ is at most $2(\ell-h)-1$.
\end{claim}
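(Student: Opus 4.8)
The plan is to argue by contradiction: suppose that $v_k$ has at least $2(\ell-h)$ non-neighbors in $U_1$, put $S=U_1\setminus N_G(v_k)$ (so $|S|\ge 2(\ell-h)\ge 2$, using $\ell\ge h+1$), and produce a copy of $K_1+nH$ in $\overline{G}$, contradicting that $G$ is a $(K_p,K_1+nH)$-Ramsey graph. I would take a vertex $z\in S$ as the hub of the fan. The point is that $N_{\overline{G}}(z)$ is large and well structured: since $z\in U_1$ and $U_1$ is independent, $U_1\setminus\{z\}\subseteq N_{\overline{G}}(z)$; since no vertex of $U'_1$ has a $G$-neighbor in $U_1$, $U'_1\subseteq N_{\overline{G}}(z)$; and since $z$ is a non-neighbor of $v_k$, $v_k\in N_{\overline{G}}(z)$. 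Moreover Property~\ref{property1} gives $d_{\overline{G}}(z)\ge nh$, so $N_{\overline{G}}(z)$ also contains a reservoir $R$ of further vertices, disjoint from $U_1\cup U'_1\cup\{v_k\}$, with $|R|\ge nh-|U_1|-|U'_1|$.

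The main work is to pack $n$ disjoint copies of $H$ into $\overline{G}[N_{\overline{G}}(z)]$. In $\overline{G}$ the set $U_1\setminus\{z\}$ induces a complete graph on $|U_1|-1\ge nh-\ell+h$ vertices; each vertex $u'\in U'_1$ is $\overline{G}$-adjacent to every vertex of $U_1$, so $\{u'\}$ together with $h-1$ fresh vertices of $U_1\setminus\{z\}$ spans a $K_h\supseteq H$; and $v_k$ is $\overline{G}$-adjacent to all of $S\setminus\{z\}$, so (this is where the largeness of $|S|$ is used) $\{v_k\}$ together with $h-1$ fresh vertices of $S\setminus\{z\}$ also spans an $H$. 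Taking one such copy for each vertex of $U'_1$, one copy through $v_k$, packing the remaining vertices of the clique $U_1\setminus\{z\}$ greedily into further copies of $H$, and, if necessary, absorbing the reservoir $R$ by a greedy extraction (using that $G$ is $K_p$-free and $r(K_p,mH)\le (m-1)h+\ell$, so that at most $\ell-1$ vertices of any independent chunk are wasted), a direct count using $|U_1|+|U'_1|\le nh$, $d_{\overline{G}}(z)\ge nh$, Property~\ref{property4}, $\ell\ge h+1$, and the hypothesis $n\ge\max\{((p+2)(\ell-h)-2)/h,\ell-h,p-1\}$ should yield at least $n$ disjoint copies of $H$, all inside $N_{\overline{G}}(z)$. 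Together with the hub $z$ this gives $K_1+nH\subseteq\overline{G}$, the desired contradiction.

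The delicate part — and the main obstacle — is that this is an essentially zero-slack vertex count: $U_1\setminus\{z\}$, $U'_1$ and $\{v_k\}$ together contribute only about $nh$ vertices, which is exactly the number of blade vertices a $K_1+nH$ requires, so the construction cannot afford to waste a vertex. In particular one must handle (i) the case $nh-|U_1|-|U'_1|>0$, where the missing blade vertices have to be drawn from the reservoir $R\subseteq N_{\overline{G}}(z)$ whose adjacencies to $U_1$, $U'_1$ and $v_k$ are not controlled, and (ii) the case where $|S|$ is small compared with $h$, where $v_k$ cannot by itself head a blade and one must re-route. It is precisely the gap between the hypothesis $|S|\ge 2(\ell-h)$ and the weaker $|S|\ge\ell-h$ that provides the room to absorb these deficiencies; converting this into a clean inequality, rather than the crude estimates above, is the technical core of the proof.
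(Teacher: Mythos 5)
Your proposal does not close, and you say so yourself: the packing argument you sketch is a genuinely zero-slack count, and the two obstacles you flag at the end are not technicalities but places where the construction actually fails. The total number of blade vertices you can draw on is $d_{\overline{G}}(z)$, which by Property~\ref{property1} lies between $nh$ and $(n-1)h+\ell-1$, so your total slack is at most $\ell-h-1$; but the greedy extraction from the uncontrolled reservoir $R$ can waste up to $\ell-1$ vertices, and the rounding of the leftover clique $U_1\setminus\{z\}$ into $h$-blocks can waste up to $h-1$ more, which already exceeds the available slack. Worse, the blade headed by $v_k$ needs $h-1$ vertices of $S\setminus\{z\}$, i.e.\ $|S|\ge h$, while the hypothesis only gives $|S|\ge 2(\ell-h)$, which can be as small as $2$ (e.g.\ $\ell=h+1$); and if $v_k$ cannot be used at all, only $d_{\overline{G}}(z)-1\ge nh-1$ blade candidates remain, one short of the $nh$ required. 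Nothing in the stated hypotheses lets you recover these losses, so the "technical core" you defer is precisely the missing proof.

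The paper's argument is of a completely different and much shorter kind, and it explains why the threshold is $2(\ell-h)$: since $v_k\notin U'_1$, it has a $G$-neighbor $u_k\in U_1$; because $U_1$ is independent, every non-neighbor of $v_k$ in $U_1$ is a common $\overline{G}$-neighbor of $v_k$ and $u_k$, so if there are at least $2(\ell-h)$ of them, inclusion--exclusion with the degree bound $d_{\overline{G}}(v)\le (n-1)h+\ell-1$ gives $|N_{\overline{G}}(v_k)\cup N_{\overline{G}}(u_k)|\le 2nh-2$. Hence at least $(p-3)nh+1$ vertices are $G$-adjacent to both $v_k$ and $u_k$, and the induction hypothesis produces a $K_{p-2}$ among them, which together with the edge $v_ku_k$ yields a $K_p$, a contradiction. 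The factor $2$ in $2(\ell-h)$ is exactly what is needed to offset the two degree upper bounds in this union estimate; your construction never exploits the hypothesis in this way, which is why it cannot be pushed through as written.
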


\begin{proof}
	First, $v_k$ has at least one neighbor in $U_1$, otherwise $v_k \in U'_1$, which contradicts the fact that $v_k\in V(G)\setminus(U_1\cup U'_1)$. Let $u_k$ be a neighbor of $v_k$ in $U_1$. Next, we will proceed by contradiction. If the number of non-neighbors of $v_k$ in $U_1$ is at least $2(\ell-h)$, then $|N_{\overline{G}}(v_k)\cap N_{\overline{G}}(u_k)|\ge 2(\ell-h)$. By the principle of inclusion-exclusion and Property~\ref{property1}, $|N_{\overline{G}}(v_k)\cup N_{\overline{G}}(u_k)|\le 2((n-1)h+\ell-1)-2(\ell-h)=2nh-2$. There are at least $(p-3)nh+1$ vertices in $V(G)$ but not in $\{v_k,u_k\}\cup N_{\overline{G}}(v_k)\cup N_{\overline{G}}(u_k)$, and all these vertices are adjacent to both $v_k$ and $u_k$. By the induction hypothesis, the induced graph formed by these vertices contains a subgraph $K_{p-2}$, which together with $\{v_k,u_k\}$ forms a $K_p$, a contradiction. Hence, our claim has been proven.
\end{proof}

\begin{claim}\label{claim9}
	For $k\in [p-2]$, if $v_1,v_2, \ldots, v_k$ have a common neighbor in $U_1$, then the number of vertices in $U_1$ that are not adjacent to at least one of $v_1,v_2, \ldots, v_k$ is at most $(k+1)(\ell-h)-1$.
\end{claim}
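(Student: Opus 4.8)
I would prove the claim by induction on $k$. The base case $k=1$ is exactly Claim~\ref{claim8}: any vertex $v_1\notin U'_1$ automatically has a neighbour in $U_1$, and $2(\ell-h)-1=(1+1)(\ell-h)-1$.

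For the inductive step with $k\ge 2$, fix a common neighbour $u\in U_1$ of $v_1,\dots,v_k$. Since $u$ is in particular a common neighbour of $v_1,\dots,v_{k-1}$, the induction hypothesis says that at most $k(\ell-h)-1$ vertices of $U_1$ miss one of $v_1,\dots,v_{k-1}$. Let $D$ be the set of vertices of $U_1$ that are adjacent to all of $v_1,\dots,v_{k-1}$ but not to $v_k$. Any vertex of $U_1$ missing one of $v_1,\dots,v_k$ either misses one of $v_1,\dots,v_{k-1}$ or lies in $D$, so it is enough to prove $|D|\le \ell-h$.

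To bound $|D|$ I would argue by contradiction, supposing $|D|\ge\ell-h+1$. Three facts are available: (i) $\{v_1,\dots,v_{k-1},v_k,u\}$ induces a $K_{k+1}$ in $G$; (ii) every vertex of $D$ is non-adjacent in $G$ to both $v_k$ and $u$, since $D\cup\{u\}$ lies in the independent set $U_1$, so $D\subseteq N_{\overline{G}}(v_k)\cap N_{\overline{G}}(u)$; and (iii) $D\cup\{v_k,u\}$ is contained in the common $G$-neighbourhood $M$ of $v_1,\dots,v_{k-1}$. Because $G$ has no $K_p$, $G[M]$ has no $K_{p-k+1}$, so the induction hypothesis on $p$ gives $|M|\le (p-k)nh$, and repeating the argument of Proposition~\ref{property1} inside $M$ shows that every vertex of $M$ has at most $(n-1)h+r(K_{p-k+1},H)-1$ non-neighbours inside $M$. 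Deleting $v_k$, $u$ and their $\overline{G}$-neighbourhoods from $M$, and using (ii) with inclusion--exclusion to save on the overlap, one is left with a set $Z\subseteq M$ whose vertices are adjacent in $G$ to all of $v_1,\dots,v_k,u$; applying the induction hypothesis on $p$ to $Z$ then produces either a $K_{p-k-1}$ in $G$, completing a $K_p$ with $\{v_1,\dots,v_k,u\}$, or a $K_1+nH$ in $\overline{G}$, a contradiction either way.

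The step I expect to be the real obstacle is the final count: a crude inclusion--exclusion using only Proposition~\ref{property1} leaves $Z$ just too small to feed into the induction hypothesis on $p$ (the deficit is of order $\ell-h$), so the slack has to be recovered. The quantities to squeeze are the sharpened degree bound inside $M$ (with $\ell$ replaced by $r(K_{p-k+1},H)\le\ell$), the lower bound on $|M|$ coming again from Proposition~\ref{property1}, the independence of $D\cup\{v_k\}$, and the hypothesis $n\ge\max\{((p+2)(\ell-h)-2)/h,\ \ell-h,\ p-1\}$; arranging all of these so that $Z$ is genuinely large enough is the delicate part of the argument.
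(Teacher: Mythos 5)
Your reduction has a genuine gap, and it sits exactly where you flag it. The induction on $k$ forces you to prove the marginal bound $|D|\le \ell-h$ for the set $D$ of vertices of $U_1$ adjacent to all of $v_1,\ldots,v_{k-1}$ but not to $v_k$; this is both unproven and stronger than what the claim asserts. Claim~\ref{claim8} already permits $v_k$ to have up to $2(\ell-h)-1$ non-neighbours in $U_1$, and nothing prevents all of them from being adjacent to $v_1,\ldots,v_{k-1}$, i.e.\ from lying in $D$; so for $\ell\ge h+2$ the target of your inductive step is not even plausibly forced by the structure established so far. Quantitatively, assuming $|D|\ge \ell-h+1$ buys an overlap saving of only $\ell-h+1$ in the inclusion--exclusion over $N_{\overline{G}}(v_1),\ldots,N_{\overline{G}}(v_k),N_{\overline{G}}(u)$, whereas by Property~\ref{property1} the sum of these $k+1$ degrees can exceed $(k+1)(nh-1)$ by $(k+1)(\ell-h)$; the set $Z$ of common neighbours you are left with is only guaranteed to have size $(p-k-2)nh+2-k(\ell-h)$, short of the needed $(p-k-2)nh+1$ by $k(\ell-h)-1$, i.e.\ the deficit grows with $k$, not ``of order $\ell-h$''. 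None of your proposed slack closes it: the terms in $n$ cancel identically, so the hypothesis on $n$ is irrelevant here (the paper's proof of this claim uses no lower bound on $n$ at all); passing to the common neighbourhood $M$ of $v_1,\ldots,v_{k-1}$ costs an extra $(k-1)(\ell-h)$ in the lower bound for $|M|$, and the sharpened degree bound inside $M$ recovers only $2\bigl(\ell-r(K_{p-k+1},H)\bigr)$, which already for $H=K_2$ gives $2(k-1)$ against a deficit of $k(p-2)-1$.

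The paper avoids the decomposition altogether. It assumes for contradiction that at least $(k+1)(\ell-h)$ vertices of $U_1$ miss some $v_i$ with $i\in[k]$, and observes that every such vertex, lying in the independent set $U_1$ together with the common neighbour $u_k$, is also a non-neighbour of $u_k$, hence is counted at least twice among $N_{\overline{G}}(v_1),\ldots,N_{\overline{G}}(v_k),N_{\overline{G}}(u_k)$. This yields the full saving $(k+1)(\ell-h)$ in one shot, so the union has size at most $(k+1)(nh-1)$, leaving at least $(p-k-2)nh+1$ common neighbours of $v_1,\ldots,v_k,u_k$; the induction on $p$ produces a $K_{p-k-1}$ there and hence a $K_p$. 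The moral is that the contradiction hypothesis must be made about the \emph{total} number of bad vertices, so that the overlap saving scales with $k+1$, rather than about the increment contributed by $v_k$ alone.
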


\begin{proof}
	Let us denote a common neighbor of $v_1,v_2, \ldots, v_k$ in $U_1$ as $u_k$. Now, let us proceed by contradiction. Suppose to the contrary that at least $(k+1)(\ell-h)$ vertices of $U_1$ satisfying that each of them is not adjacent to at least one of $v_1,v_2, \ldots, v_k$. By employing the principle of inclusion-exclusion and utilizing Property~\ref{property1}, it follows that $|N_{\overline{G}}(v_1)\cup \cdots\cup  N_{\overline{G}}(v_k)\cup N_{\overline{G}}(u_k)|\le (k+1)((n-1)h+\ell-1)-(k+1)(\ell-h)=(k+1)(nh-1)$. There are at least $(p-k-2)nh+1$ vertices in $V(G)$ that are not in the set $\{v_1, \ldots, v_k, u_k\}\cup N_{\overline{G}}(v_1)\cup \cdots\cup N_{\overline{G}}(v_k)\cup N_{\overline{G}}(u_k)$, and all of these vertices are adjacent to each of $v_1, \ldots, v_k, u_k$. By the induction hypothesis, the induced subgraph formed by these vertices contains a subgraph $K_{p-k-1}$, which together with $\{v_1, \ldots, v_k, u_k\}$ forms a $K_p$, a contradiction. This completes our claim.
\end{proof}
	
\begin{claim}\label{claim10}
	For $k\in [p-2]$, if the number of vertices in $U_1$ that are not adjacent to at least one of $v_1,v_2, \ldots, v_k$ is at most $(k+1)(\ell-h)-1$, and the number of non-neighbors of $v_{k+1}$ in $U_1$ is at most $2(\ell-h)-1$, then $v_1,v_2, \ldots, v_k, v_{k+1}$ have a common neighbor in $U_1$.
\end{claim}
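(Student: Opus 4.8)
The plan is to show that the vertices of $U_1$ which are \emph{not} a common neighbour of $v_1,\ldots,v_{k+1}$ are too few to exhaust $U_1$, so that a common neighbour must survive. First I would observe that a vertex $w\in U_1$ fails to be a common neighbour of $v_1,\dots,v_{k+1}$ precisely when either $w$ is non-adjacent to at least one of $v_1,\dots,v_k$, or $w$ is non-adjacent to $v_{k+1}$ (or both). By the two hypotheses of the claim, the first kind of vertex contributes at most $(k+1)(\ell-h)-1$ and the second at most $2(\ell-h)-1$, so a union bound gives that the number of "bad" vertices in $U_1$ is at most
\[
\bigl((k+1)(\ell-h)-1\bigr)+\bigl(2(\ell-h)-1\bigr)=(k+3)(\ell-h)-2 .
\]

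Next I would compare this with the lower bound $|U_1|\ge nh-\ell+h+1$ already established just before Claim~\ref{claim8}. It suffices to verify $nh-\ell+h+1>(k+3)(\ell-h)-2$, that is, $nh>(k+4)(\ell-h)-3$. Here I would use $k\le p-2$ together with $\ell\ge h+1$ (so $\ell-h\ge 1$) to get $(k+4)(\ell-h)-3\le (p+2)(\ell-h)-3$, and then the standing hypothesis $n\ge((p+2)(\ell-h)-2)/h$ of this subsection to get $nh\ge(p+2)(\ell-h)-2>(p+2)(\ell-h)-3$. Chaining these inequalities shows that the bad vertices number strictly fewer than $|U_1|$, hence some vertex of $U_1$ is adjacent to all of $v_1,\dots,v_{k+1}$, which is the required common neighbour.

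The argument is essentially a one-line union bound, so I do not expect a genuine obstacle. The only points that need care are: using the correct pigeonhole estimate $|U_1|\ge (|V(G)|-|U_p|)/(p-1)\ge nh-\ell+h+1$ rather than a weaker one; and keeping track of where the slack in the hypothesis $n\ge((p+2)(\ell-h)-2)/h$ is consumed, namely in the comparison of $(k+4)$ with $(p+2)$, which is precisely what pins down the admissible range of $n$ in this subsection. It is also worth noting in passing that the two "bad" sets need not be disjoint, but since only an upper bound is needed this is harmless.
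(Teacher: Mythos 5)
Your proposal is correct and is essentially the paper's own argument: both bound the non-common-neighbours in $U_1$ by $((k+1)(\ell-h)-1)+(2(\ell-h)-1)$, compare with $|U_1|\ge nh-\ell+h+1$, and close the gap using $k\le p-2$, $\ell\ge h+1$, and the standing hypothesis $n\ge((p+2)(\ell-h)-2)/h$. No discrepancies to report.
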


\begin{proof}
	Since $|U_1|\ge nh-\ell+h+1$, the number of vertices in $U_1$ that are adjacent to all of $v_1,v_2, \ldots, v_k, v_{k+1}$ is at least \[nh-\ell+h+1-((k+1)(\ell-h)-1)-(2(\ell-h)-1)\ge nh- ((p+2)(\ell-h)-3)\ge 1\,.\]  Therefore, $v_1,v_2, \ldots, v_k, v_{k+1}$ have a common neighbor in $U_1$.
\end{proof}

Combining Claims~\ref{claim8} and \ref{claim10}, we know that $v_1$ and $v_2$ have a common neighbor in $U_1$. Furthermore, combining Claims~\ref{claim8}, \ref{claim9}, and \ref{claim10}, we know that $v_1$, $v_2$, and $v_3$ have a common neighbor in $U_1$. Continuing this process iteratively, we eventually obtain a vertex in $U_1$ that is adjacent to each of $v_1, v_2, \ldots, v_{p-1}$. This means that $G$ contains $K_p$ as a subgraph, resulting in a final contradiction.

\section{Complete graphs versus fans}\label{secfan}
From the previous section we know that $r(K_p,F_n)=2(p-1)n+1$ holds for $n\ge (p^2-6)/2$ and $p\ge 3$. In this section we shall derive a slightly more refined bound of $n$. In other words, we prove that the equality holds for $n\ge (p^2-p-2)/2$ and $p\ge 3$. The lower bound has been proven in the previous section, so we focus on the upper bound. We will again resort to proof by contradiction, considering a graph $G$ with $2(p-1)n+1$ vertices. Suppose that neither $G$ contains a $K_p$ nor $\overline{G}$ contains an $F_n$. Similar to the previous proofs, we will arrive at a contradiction through a series of claims.

When $H=K_2$, we have $\ell=r(K_p, H)=p$. According to Property~\ref{property1}, we immediately obtain the following claim.

\begin{claim}\label{claim3.1}
	For every vertex $v$, we have $2n\le d_{\overline{G}}(v)\le 2n+p-3$.
\end{claim}

We employ the same approach as in Subsection~\ref{subsection3} and obtain the corresponding results. In other words, $U_1, \ldots, U_p$ form a partition of $V(G)$, $|U_p|\le p^{2}-3p+1$, and each $U_i$ is an independent set for $i\in [p-1]$. Without loss of generality, assume that $|U_1|\ge |U_2|\ge \cdots\ge |U_{p-1}|$. We can similarly establish that \[|U_1|\ge 2n-p+3.\]

What is more, we can derive a lower bound for $|U_2|$. Since $|U_2|\ge (|V(G)|-|U_p|-|U_1|)/(p-2)$, and $|U_1|\le 2n-1$, substituting these values yields \[|U_2|\ge 2n-p+2.\]

Similar to the previous section, we define a subset $U'_1$ in $V(G)\setminus (U_1\cup U_2)$, which consists of vertices that are not adjacent to any vertex of $U_1$. Additionally, we define another subset $U'_2$: for any vertex in $V(G)\setminus (U_1\cup U_2)$, if it is not adjacent to any vertex of $U_2$, then it belongs to $U'_2$. To avoid an $F_n$ in $\overline{G}$, we can also verify that $|U_i|+|U'_i|\le 2n$ for $i \in [2]$.

There are at least $2(p-3)n+1$ vertices in $G-(U_1\cup U_2\cup U'_1\cup U'_2)$. By the induction hypothesis, $G-(U_1\cup U_2\cup U'_1\cup U'_2)$ contains $K_{p-2}$ as a subgraph. We denote these $p-2$ vertices as $v_1,v_2, \ldots, v_{p-2}$. Next, we aim to prove that there exists a vertex from $U_1$ and a vertex from $U_2$ that, together with $v_1,v_2, \ldots, v_{p-2}$, form a complete graph $K_p$. The proofs for the following three claims are identical to those of Claims~\ref{claim8}, \ref{claim9}, and \ref{claim10}. We omit their proofs here.

\begin{claim}\label{claim11}
	For $k\in [p-2]$ and $i\in [2]$, the number of non-neighbors of $v_k$ in $U_i$ is at most $2p-5$.
\end{claim}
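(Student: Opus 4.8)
The plan is to transcribe, almost verbatim, the proof of Claim~\ref{claim8}, now specialized to $H=K_2$, where $h=2$ and $\ell=r(K_p,K_2)=p$. With these values the target bound $2(\ell-h)-1$ becomes exactly $2p-5$, which is the quantity appearing in Claim~\ref{claim11}. Fix $k\in[p-2]$ and $i\in[2]$. First I would observe that $v_k$ must have at least one neighbor in $U_i$: since $v_k\notin U_1\cup U_2\cup U'_1\cup U'_2$, if $v_k$ had no neighbor in $U_i$ it would by definition belong to $U'_i$, a contradiction. Pick such a neighbor $u_k\in U_i$.

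Then I would argue by contradiction. Suppose $v_k$ has at least $2(p-2)$ non-neighbors in $U_i$. Because $U_i$ is an independent set (established exactly as in Subsection~\ref{subsection3}), every vertex of $U_i$ other than $u_k$ is a non-neighbor of $u_k$ in $G$; in particular all of these $\ge 2(p-2)$ non-neighbors of $v_k$ in $U_i$ lie in $N_{\overline{G}}(u_k)$ as well, so $|N_{\overline{G}}(v_k)\cap N_{\overline{G}}(u_k)|\ge 2(p-2)$. Combining this with the degree bound $d_{\overline{G}}(w)\le 2n+p-3$ from Claim~\ref{claim3.1} and inclusion-exclusion gives $|N_{\overline{G}}(v_k)\cup N_{\overline{G}}(u_k)|\le 2(2n+p-3)-2(p-2)=4n-2$.

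Finally I would count the vertices outside $\{v_k,u_k\}\cup N_{\overline{G}}(v_k)\cup N_{\overline{G}}(u_k)$: there are at least $2(p-1)n+1-2-(4n-2)=2(p-3)n+1$ of them, and each is adjacent in $G$ to both $v_k$ and $u_k$. Since $\overline{G}$ contains no $F_n$, the induction hypothesis $r(K_{p-2},F_n)=2(p-3)n+1$ applies, so the subgraph $G$ induces on this set contains a copy of $K_{p-2}$; together with $v_k$ and $u_k$ (adjacent to one another and to all these vertices) this produces a $K_p$ in $G$, contradicting the standing assumption. Hence $v_k$ has at most $2p-5$ non-neighbors in $U_i$, as claimed.

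There is no genuine obstacle here: the argument is a direct specialization of Claim~\ref{claim8} to $h=2$, $\ell=p$, which is precisely why the paper says the proof is identical and omits it. The only points worth a quick sanity check are the arithmetic, namely that $2(2n+p-3)-2(p-2)=4n-2$ and that $2(p-1)n+1-2-(4n-2)$ collapses to $2(p-3)n+1$ so the induction hypothesis fires at exactly the right threshold, and the routine verification that the independence of $U_i$ for $i\in[2]$ is inherited from the Subsection~\ref{subsection3} setup (which it is).
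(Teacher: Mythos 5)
Your proof is correct and is exactly the argument the paper intends: the paper omits the proof of Claim~\ref{claim11}, stating it is identical to that of Claim~\ref{claim8}, and your write-up is precisely that proof specialized to $h=2$, $\ell=p$ (using independence of $U_i$ to get $|N_{\overline{G}}(v_k)\cap N_{\overline{G}}(u_k)|\ge 2(p-2)$, Claim~\ref{claim3.1} for the degree bound, and the induction hypothesis on $2(p-3)n+1$ vertices to force a $K_{p-2}$ and hence a $K_p$). The arithmetic checks out, so nothing further is needed.
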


\begin{claim}\label{claim12}
	For $k\in [p-2]$ and $i\in [2]$, if $v_1,v_2, \ldots, v_k$ have a common neighbor in $U_i$, then the number of vertices in $U_i$ that are not adjacent to at least one of $v_1,v_2, \ldots, v_k$ is at most $(k+1)(p-2)-1$.
\end{claim}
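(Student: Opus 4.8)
The plan is to carry over, with only a substitution of parameters, the argument proving Claim~\ref{claim9}: here $H=K_2$, so $h=2$ and $\ell=r(K_p,K_2)=p$, and Claim~\ref{claim3.1} plays the role of Property~\ref{property1}, i.e.\ $d_{\overline G}(w)\le (n-1)h+\ell-1=2n+p-3$ for every vertex $w$. Fix $k\in[p-2]$ and $i\in[2]$, and let $u_k\in U_i$ be a common neighbour of $v_1,\dots,v_k$ in $G$, which exists by hypothesis. Arguing by contradiction, suppose $U_i$ has at least $(k+1)(p-2)$ vertices each of which is non-adjacent to at least one of $v_1,\dots,v_k$. The first step is the observation that every such vertex is charged at least twice to the quantity $\sum_{j=1}^{k}|N_{\overline G}(v_j)|+|N_{\overline G}(u_k)|$: it lies in $N_{\overline G}(v_j)$ for some $j\in[k]$, and, being a vertex of the independent set $U_i$ distinct from $u_k\in U_i$, it also lies in $N_{\overline G}(u_k)$. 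Combining this with Claim~\ref{claim3.1} via inclusion--exclusion gives
\[
\Big|\,\bigcup_{j=1}^{k}N_{\overline G}(v_j)\ \cup\ N_{\overline G}(u_k)\,\Big|\ \le\ (k+1)(2n+p-3)-(k+1)(p-2)\ =\ (k+1)(2n-1).
\]

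The second step is the standard deletion-and-induction move. The number of vertices of $G$ lying neither in this union nor in $\{v_1,\dots,v_k,u_k\}$ is at least $2(p-1)n+1-(k+1)(2n-1)-(k+1)=2(p-k-2)n+1$, and each of them is adjacent in $G$ to all of $v_1,\dots,v_k,u_k$. Since $p-k-1<p$, the inductive form of the theorem gives $r(K_{p-k-1},F_n)\le 2(p-k-2)n+1$; as $\overline G$ contains no $F_n$, the subgraph of $G$ induced on these surviving vertices must contain $K_{p-k-1}$. Because $\{v_1,\dots,v_k\}$ is a clique (a part of the fixed $K_{p-2}$) and $u_k$ is adjacent to each $v_j$, the set $\{v_1,\dots,v_k,u_k\}$ is a $(k+1)$-clique every vertex of which is adjacent to the $K_{p-k-1}$ just found; their union is a $K_p$ in $G$, contradicting the choice of $G$.

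I do not expect a real obstacle here, as this is essentially a verbatim transcription of the proof of Claim~\ref{claim9} with $h=2$ and $\ell=p$. The points deserving a little care are: the double-counting step, which is precisely where independence of $U_i$ enters and which needs the offending vertices to be genuinely distinct from $u_k$ and from all the $v_j$ (true because $v_1,\dots,v_{p-2}$ were selected outside $U_1\cup U_2$, while $u_k$ meets each $v_j$); and the degenerate extremes, namely $k=p-2$, where $K_{p-k-1}=K_1$ and the count $2(p-k-2)n+1$ collapses to $1$, and $p-k-1\le 2$, where the invoked instance of the theorem is a base case. Everything else is routine arithmetic in $n$ and $p$.
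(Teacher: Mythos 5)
Your proof is correct and is essentially the paper's argument: the paper proves Claim~\ref{claim12} by declaring it identical to Claim~\ref{claim9} with $h=2$, $\ell=p$, which is exactly the specialization you carry out (including the same inclusion--exclusion bound $(k+1)(2n-1)$, the count $2(p-k-2)n+1$ of surviving common neighbours, and the appeal to the induction on $p$). Your explicit remarks on where independence of $U_i$ and the distinctness of the bad vertices from $u_k$ and the $v_j$ enter are accurate and merely spell out what the paper leaves implicit.
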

	
\begin{claim}\label{claim13}
	For $k\in [p-3]$ and $i\in [2]$, if the number of vertices in $U_i$ that are not adjacent to at least one of $v_1,v_2, \ldots, v_k$ is at most $(k+1)(p-2)-1$, and the number of non-neighbors of $v_{k+1}$ in $U_i$ is at most $2p-5$, then $v_1,v_2, \ldots, v_k, v_{k+1}$ have a common neighbor in $U_i$.
\end{claim}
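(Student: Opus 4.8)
The plan is to reproduce the double-counting estimate behind Claim~\ref{claim10}, now run inside $U_i$ for $i\in[2]$ and calibrated against the hypothesis $n\ge (p^2-p-2)/2$. First I would invoke the two bounds recorded just above the claim, namely $|U_1|\ge 2n-p+3$ and $|U_2|\ge 2n-p+2$, so that in either case $|U_i|\ge 2n-p+2$.

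Next, inside $U_i$ I would discard the two ``bad'' sets. Let $A$ be the set of vertices of $U_i$ that fail to be adjacent to some $v_j$ with $j\le k$; by hypothesis $|A|\le (k+1)(p-2)-1$. Let $B$ be the set of non-neighbours of $v_{k+1}$ lying in $U_i$; by hypothesis $|B|\le 2p-5$. Every vertex of $U_i\setminus(A\cup B)$ is adjacent to all of $v_1,\dots,v_{k+1}$, so it is enough to verify $|U_i|-|A|-|B|\ge 1$; note that a union bound is all that is needed, so a possible overlap of $A$ and $B$ is harmless. Using $k\le p-3$, hence $(k+1)(p-2)\le (p-2)^2=p^2-4p+4$, we obtain
\[
|U_i|-|A|-|B|\ \ge\ (2n-p+2)-\big((k+1)(p-2)-1\big)-(2p-5)\ \ge\ 2n-p^2+p+4\,,
\]
which is at least $2$ precisely because $n\ge (p^2-p-2)/2$. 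Any vertex realising this count is a common neighbour of $v_1,\dots,v_{k+1}$ in $U_i$, as required.

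I do not expect a genuine obstacle: the statement is a pure counting estimate, and the threshold $(p^2-p-2)/2$ for $n$ is exactly what makes the final inequality hold (at the boundary the count is still $2$). The only two points worth stating with care are that $A$ and $B$ are both subsets of $U_i$, so subtracting their sizes from $|U_i|$ is legitimate, and that a single computation covers $i=1$ and $i=2$ once one uses the weaker common bound $|U_i|\ge 2n-p+2$; this is precisely why the argument is ``identical'' to that of Claim~\ref{claim10} after the substitution $h=2$, $\ell=p$, and may be omitted.
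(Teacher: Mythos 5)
Your proof is correct and follows essentially the same route as the paper: the paper omits the proof of this claim, stating it is identical to that of Claim~\ref{claim10}, and your argument is exactly that counting estimate specialized to $h=2$, $\ell=p$, using $|U_i|\ge 2n-p+2$ and the hypothesis $n\ge (p^2-p-2)/2$. The arithmetic checks out (the count of common neighbours is at least $2\ge 1$), so nothing is missing.
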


We denote the set of common neighbors of $v_1,v_2, \ldots, v_{p-2}$ in $U_1$ as $X$, and the set of common neighbors of $v_1,v_2, \ldots, v_{p-2}$ in $U_2$ as $Y$. According to the three claims above, we have $|X|\ge p$ and $|Y|\ge p-1$. If there is an edge between $X$ and $Y$, then the two endpoints of this edge, together with $v_1,v_2, \ldots, v_{p-2}$, form a complete graph $K_p$ which leads to a contradiction. If there is no edge between $X$ and $Y$, then there exists a vertex $x$ in $U_1$ that is not adjacent to at least $2n-p+2$ vertices in $U_1$ and not adjacent to at least $p-1$ vertices in $U_2$. Since both $U_1$ and $U_2$ are independent sets, these $2n+1$ vertices that are adjacent to $x$ in $\overline{G}$ must contain an $nK_2$ as a subgraph. Thus, we obtain an $F_n$ in $\overline{G}$, resulting in a final contradiction.

\section*{Acknowledgments}


The first author was partially supported by NSFC under grant numbers 11601527 and 11971011, while the second author was partially supported by NSFC under grant numbers 12161141003 and 11931006.

\end{document}